\theoremstyle{plain}
\newtheorem{theorem}{Theorem}[section]
\newtheorem{proposition}[theorem]{Proposition}
\newtheorem{lemma}[theorem]{Lemma}
\newtheorem{corollary}[theorem]{Corollary}
\theoremstyle{remark}
\newtheorem{definition}[theorem]{Definition}
\newtheorem{remark}[theorem]{Remark}
\def\PO{\operatorname{PO}}
\def\dom{\operatorname{dom}}
\def\cod{\operatorname{cod}}
\def\dens{\operatorname{dens}}
\def\R{\mathbb{R}}
\def\Gur{\mathbb{G}}
\def\Gp{\mathbb{G}_p}
\def\e{\varepsilon}
\newcommand{\fra}{Fra\"\i ss\'e}
\newcommand{\Nat}{\mathbb N}
\newcommand{\Pp}{\mathbb P_p}
\newcommand{\map}[3]{#1 \colon #2 \to #3}
\newcommand{\cmp}{\circ}
\newcommand{\Ha}{\mathbb H}
\newcommand{\fK}{\mathfrak H}
\newcommand{\norm}[1]{\|#1\|}
\newcommand{\K}{\mathbb K} 
\title{Quasi-Banach spaces of almost universal disposition}
\author[F. Cabello S\'anchez, J. Garbuli\'nska-W\c egrzyn, W. Kubi\'s]{F\'elix Cabello S\'anchez, Joanna Garbuli\'nska-W\c egrzyn, and Wies\l aw Kubi\'s\footnote{Research of FCS supported in part by MTM2010-20190-C02-01 and Junta de Extremadura GR10113.}
 \footnote{Research of WK supported by GA\v CR grant P 201/12/0290 and RVO: 67985840.}}
\address{Departamento de Matem\'{a}ticas, UEx, 06071-Badajoz, Spain}
\email{fcabello@unex.es}
\address{Institute of Mathematics, Jan Kochanowski University, \'Swietokrzyska 15, 25-406 Kielce, and
Institute of Mathematics, Faculty of Mathematics and Computer Science, Jagiellonian University, Lojasiewicza 6, 30-348 Krakow, Poland}
\email{jgarbulinska@ujk.edu.pl}
\address{Institute of Mathematics, Academy of Sciences of the Czech Republic, Zitn'25, 115 67 Praha 1, Czech Republic}
\email{kubis@math.cas.cz}
\begin{document}
\noindent{{\footnotesize April 29, 2014}\\[10pt]}

\begin{abstract}
{\bf We show that for each $p\in(0,1]$ there exists a separable $p$-Banach space $\mathbb G_p$ of almost universal disposition, that is, having the following extension property: for each $\e>0$ and each isometric embedding $g:X\to Y$, where $Y$ is a finite-dimensional $p$-Banach space and $X$ is a subspace of $\mathbb G_p$, there is an $\e$-isometry $f:Y\to \mathbb G_p$ such that $x=f(g(x))$ for all $x\in X$.

Such a space is unique, up to isometries, does contain an isometric copy of each separable $p$-Banach space and has the remarkable property of being ``locally injective'' amongst $p$-Banach spaces.

We also present a nonseparable generalization which is of universal disposition for separable spaces and ``separably injective''.
No separably injective $p$-Banach space was previously known for $p<1$.
}
\end{abstract}

\subjclass[2010]{
46A16, 
46B04
}
\keywords{$p$-Gurari\u\i\ space, space of universal disposition, isometry, quasi-Banach space}

\maketitle

\tableofcontents

\bigskip

\section{Introduction}

\subsection{Background}
In 1965, Gurari\u\i\ constructed a separable Banach space $\Gur$ of ``almost universal disposition for finite-dimensional spaces'', that is, having the following extension property: for every isometry $g: X \to Y$, where $Y$ is a finite-dimensional Banach space and $X$ is a subspace of $\Gur$, and every $\e>0$ there is an $\e$-isometry $f:Y\to \Gur$ such that $f(g(x))=x$ for all $x\in X$.

It is almost obvious that if $V$ is any other separable Banach space fitting in the definition, then there is a linear isomorphism $u:\Gur\to V$ with $\|u\|\cdot\|u^{-1}\|$ arbitrarily close to 1.

Gurari\u\i's creature spurred a considerable interest in Banach space theory and is still an object of intense research.  Amongst the main hits we find the following. Lusky proved in \cite{l-unique} that all separable Banach spaces of almost universal disposition are isometric; see \cite{ks} for a simpler proof. The space $\Gur$ is a Lindenstrauss space, that is, its dual space is isometric to an $L_1$-space. Moreover, every separable Lindenstrauss space is isometric to a subspace of $\Gur$ which is the range of a nonexpansive projection.
This was proved by Wojtaszczyk \cite{Wojt}, see also \cite[Proposition 8]{l-survey} where it is shown that one can arrange the projection so that its kernel is again isometric to $\Gur$. The Gurari\u\i\ space is complemented in no space of type $C(K)$ and it is isomorphic to the space of all continuous affine functions on the Poulsen simplex; see \cite[Corollary 2]{b-l} and \cite{l-survey}.

We refer the reader to the survey paper \cite{g-k} for more information and references reporting recent work and to \cite{group} for a related construction in group theory.

\subsection{The plan of the paper}
There is no clear intrinsic reason to restrict attention to Banach spaces in studying the extension of isometries.
In this paper  we push the notion of ``universal disposition'' and its relatives into the larger class of quasi-Banach spaces.

We shall construct, for each $p\in(0,1]$, a separable $p$-Banach space of almost universal disposition for finite-dimensional $p$-Banach spaces, which turns out to be unique, up to isometries, and that we will call $\Gp$.
Our main tools are the push-out construction and the notion of a direct limit, whose adaptations to the $p$-normed setting are presented in Sections~\ref{sec:po} and \ref{sec:direct}. The construction itself is carried out in Section~\ref{sec:main}.

In Section~\ref{sec:uni} we prove that any two separable $p$-Banach spaces of almost universal disposition for finite-dimensional $p$-Banach spaces are isometric. As a consequence, $\mathbb G_p$ contains an isometric copy of each separable $p$-Banach space, which improves a classical result by Kalton and provides a complete solution to an old problem in the isometric theory of quasi-Banach spaces.
Up to this point the paper is rather elementary and self-contained.

In Section~\ref{sec:w1} we present a nonseparable generalization.  We construct a $p$-Banach space whose density character is the continuum and which is of universal disposition for separable $p$-Banach spaces.  We also mention a result of Ben Yaacov and Henson, with a simpler argument provided by Richard Haydon, showing that it is impossible to reduce the size of the space in the preceding result.  We prove that these spaces contain isometric copies of all $p$-Banach spaces with density character $\aleph_1$ or less and that they are all isometric under the continuum hypothesis.

Section~\ref{sec:inj} studies the extension of operators with values in the spaces of (almost) universal disposition. Let us pause a moment for some definitions.
First, following a long standing tradition, a quasi-Banach space $E$ would be injective amongst $p$-Banach spaces if there is a constant $\lambda\geq 1$ such that for every $p$-Banach space $X$ and every subspace $Y$ of $X$ every operator $t:Y\to E$ extends to an operator $T:X\to E$ with $\|T\|\leq \lambda\|t\|$.
Also, we say that $E$ is separably injective amongst $p$-Banach spaces if the preceding condition holds for $X$ separable and we say that it is locally injective if it holds when $X$ is finite-dimensional.

After proving that there is no injective $p$-Banach space, apart from $0$, we show that $\mathbb G_p$ is ``locally injective'' (see Definition~\ref{DefVrsbtix}) and also that any space of universal disposition for separable $p$-Banach spaces is separably injective. No separably injective $p$-Banach space had been previously known for $p<1$.

In Section~\ref{sec:operas} we show the existence of a nonexpansive projection on $\Gp$ whose kernel is isometric to $\Gp$. Moreover, this projection is universal in the sense that the class of all its restrictions to closed subspaces contains (up to isometry) all possible nonexpansive operators from separable $p$-Banach spaces into $\Gp$.

Finally, the closing Section~\ref{closing} contains some miscellaneous remarks and questions which we found interesting.

\subsection{Quasi-Banach spaces}
We shall denote by $\K$ the field of scalars, which is fixed to be either the field of real or complex numbers.

A quasinorm on a $\K$-linear space $X$ is a function $\|\cdot\|:X\to \R_+$ satisfying the following conditions:
\begin{itemize}
\item If $\|x\|=0$, then $x=0$.
\item $\|\lambda x\|=|\lambda|\/\|x\|$ for every $\lambda\in\mathbb K$ and every $x\in X$.
\item There is a constant $C$ such that $\|x+y\|\leq C(\|x\|+\|y\|)$ for all $x,y\in X$.
\end{itemize}
A quasinorm gives rise to a linear topology on $X$, namely the least linear topology for which the unit ball $B=\{x\in X:\|x\|\leq 1\}$ is a
neighborhood of zero. This topology is locally bounded, that is, it has a bounded neighborhood of zero.
Actually, every locally bounded topology arises in this way. We refer the reader to \cite{kpr, r} for general information on locally bounded spaces.

A quasinormed space is a linear space $X$ equipped with a quasinorm. If $X$ is complete for the quasinorm topology we say that $X$ is a quasi-Banach space.

Let $p\in(0,1]$. A $p$-normed (respectively, $p$-Banach) space is a quasinormed (respectively, quasi-Banach) space whose quasinorm is a $p$-norm, that is, it satisfies the inequality $\|x+y\|^p\leq \|x\|^p+\|y\|^p$. The case $p=1$ corresponds to the popular class of Banach spaces. Observe that every $p$-norm is also a $q$-norm for each $q\leq p$.

It is an important result of Aoki and Rolewicz that every quasinorm is equivalent to a $p$-norm for some $p\in(0,1]$ in the sense that they induce the same topology; see \cite[Theorem 1.3]{kpr} or \cite[Theorem 3.2.1]{r}.

Let $X$ and $Y$ be quasinormed spaces. A linear map $f:X\to Y$ is a (bounded) operator if there is a constant $K$ such that $\|f(x)\|_Y\leq K\|x\|_X$ for all $x\in X$. The infimum of the constants $K$ for which the preceding inequality holds is denoted by $\|f\|$, the quasinorm of $f$.

 If, besides, one has $(1-\e)\|x\|_X\leq \|f(x)\|_Y\leq (1+\e)\|x\|_X$ for some $\e\in[0,1)$ independent of $x\in X$, then $f$ is called an $\e$-isometry.
 If $\|f(x)\|_Y=\|x\|_X$ for all $x\in X$, then $f$ is called an isometry. Isometries are not assumed to be surjective. However, we say that $X$ and $Y$ are isometric if there is a surjective isometry $f:X\to Y$.

Note that there is no quasi-Banach space containing, for every $\e>0$ and every $p\in(0,1]$, a subspace $\e$-isometric to the 2-dimensional space $\ell_p^2$, the space $\mathbb K^2$ with the $p$-norm $\|(s,t)\|_p=(|s|^p+|t|^p)^{1/p}$. So, strictly speaking, the title of the paper is a bit exaggerated.

\subsection{Push-outs}\label{sec:po}
This section is an adaptation of \cite[Section 2.1]{accgm} to the $p$-normed setting.

Let $(X_\gamma)_{\gamma\in\Gamma}$ be a family of $p$-Banach spaces, where $\Gamma$ is a set of indices. We set
$$\ell_p(\Gamma, X_\gamma) =\left\{(x_\gamma)\in \prod_{\gamma\in\Gamma} X_\gamma : \left(\sum_\gamma\|x_\gamma\|^p\right)^{1/p}<\infty\right\}
$$
with the obvious $p$-norm. If the family has two spaces only, say $X$ and $Y$, we just write $X\oplus_p Y$. It is important to realize that this construction represents the direct sum in the category of $p$-Banach spaces and nonexpansive operators in the obvious sense.

Let $u: X\to Y$ and $v:X\to Z$ be operators acting between $p$-normed spaces. The associated push-out diagram is
\begin{equation}\label{PO}
\begin{CD}
X@>u>> Y\\
@V v VV @VV v'V\\
Z@> u' >>\PO
\end{CD}
\end{equation}

Here, $\PO=\PO(u,v)$ is the quotient of the $p$-sum $Y\oplus_p Z$ by $S$, the closure of the subspace $\{(u(x),-v(x)): x\in X\}$. The map $u':Z\to \PO$ is the inclusion of $Z$ into $Y\oplus_p Z$, followed by the quotient map of  $Y\oplus_p Z$ onto $\PO = (Y\oplus_p Z)/S$. The operator $v'$ is defined analogously. The universal property behind this construction is that the preceding diagram is ``minimally commutative'', in the sense that if $v'':Y\to E$ and $u'':Z\to E$ are operators such that $u''\circ v=v''\circ u$, then there is a
unique operator $w:\PO\to E$ satisfying $u''=w\circ u'$ and $v''= w \circ v'$.
Clearly, $w((y,z)+S)=v''(y)+u''(z)$ and $\|w\|\leq \max(\|v''\|,\|u''\|)$.

As for the quasinorm of the operators appearing in (\ref{PO}) it is obvious that both $u'$ and $v'$ are nonexpansive. The proof of the following remark is left to the reader.

\begin{lemma}
Referring to Diagram~\ref{PO}, if $u$ is an isometry and $\|v\|\leq 1$, then $u'$ is an isometry.
\end{lemma}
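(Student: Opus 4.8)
The plan is to compute the quotient norm of $u'(z)$ and bound it below by $\|z\|_Z$. Since $u'$ is the isometric inclusion $z \mapsto (0,z)$ of $Z$ into $Y \oplus_p Z$ followed by the nonexpansive quotient map onto $\PO = (Y \oplus_p Z)/S$, it is automatically nonexpansive, so $\|u'(z)\| \leq \|z\|_Z$ and only the reverse inequality needs work.

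First I would write the quotient norm as an infimum over the dense subspace $\{(u(x),-v(x)) : x\in X\}$ that generates $S$; since the infimum of a continuous function over a set equals its infimum over the closure, this is legitimate. Thus
$$
\|u'(z)\| = \inf_{x\in X} \bigl\| (-u(x),\, z+v(x)) \bigr\|_{Y\oplus_p Z} = \inf_{x\in X} \bigl( \|u(x)\|_Y^p + \|z+v(x)\|_Z^p \bigr)^{1/p}.
$$
Using that $u$ is an isometry, I replace $\|u(x)\|_Y$ by $\|x\|_X$, so it suffices to show $\|x\|_X^p + \|z+v(x)\|_Z^p \geq \|z\|_Z^p$ for every $x\in X$.

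The key step is one application of the $p$-norm inequality combined with $\|v\|\leq 1$. Writing $z = (z+v(x)) - v(x)$ and applying $\|a+b\|^p \leq \|a\|^p + \|b\|^p$ gives $\|z\|_Z^p \leq \|z+v(x)\|_Z^p + \|v(x)\|_Z^p$, while $\|v\|\leq 1$ forces $\|v(x)\|_Z^p \leq \|x\|_X^p$. Substituting yields the required inequality for each $x$, and taking the infimum over $x$ (the map $t\mapsto t^{1/p}$ being increasing) gives $\|u'(z)\| \geq \|z\|_Z$. Combined with nonexpansiveness, $\|u'(z)\| = \|z\|_Z$, so $u'$ is an isometry.

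I expect no genuine obstacle here. The only point deserving care is the reduction of the infimum over the closed subspace $S$ to the dense generating subspace, justified by continuity of the norm. The isometry hypothesis on $u$ is used exactly to convert $\|u(x)\|_Y$ into $\|x\|_X$, and $\|v\|\leq 1$ is precisely what lets the error term $\|v(x)\|_Z^p$ be absorbed into $\|x\|_X^p$.
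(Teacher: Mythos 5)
Your proof is correct, and since the paper explicitly leaves this verification to the reader, it is precisely the intended argument: reduce the quotient norm to an infimum over the dense generating set $\{(u(x),-v(x)):x\in X\}$, use the isometry hypothesis to write $\|u(x)\|_Y=\|x\|_X$, and absorb $\|v(x)\|_Z^p\leq\|x\|_X^p$ via the $p$-triangle inequality. All the steps, including the continuity justification for passing from $S$ to its dense subspace, are sound.
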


\subsection{Direct limits}\label{sec:direct}
Let $(X_\alpha)$ be a family of $p$-Banach spaces indexed by a directed set $\Gamma$ whose preorder is denoted by $\leq$. Suppose that, for each $\alpha,\beta\in \Gamma$ with $\alpha\leq \beta$ we have an isometry $f_\alpha^\beta: X_\alpha\to X_\beta$ in such a way that $f_\alpha^\alpha$ is the identity on $X_\alpha$ for every $\alpha\in\Gamma$ and $f_\beta^\gamma\circ f_\alpha^\beta=f_\alpha^\gamma$ provided $\alpha\leq \beta\leq \gamma$. Then $(X_\alpha, f_\alpha^\beta)$ is said to be a directed system of $p$-Banach spaces.

The direct limit of the system is constructed as follows.
First we take the disjoint union $\bigsqcup_\alpha X_\alpha$ and we define an equivalence relation $\sim$ by identifying $x_\alpha\in X_\alpha$ and $x_\beta\in X_\beta$ if there is $\gamma\in\Gamma$ such that $f_\alpha^\gamma(x_\alpha)=f_\beta^\gamma(x_\beta)$.

Then we may use the natural inclusion maps $\imath_\gamma: X_\gamma\to \bigsqcup_\alpha X_\alpha$ to transfer the linear structure and the $p$-norm from the spaces $X_\alpha$ to $\bigsqcup_\alpha X_\alpha/\!\sim$ thus obtaining a $p$-normed space whose completion is called the direct limit of the system and is denoted by $\underrightarrow{\lim} X_\gamma$. The universal property behind this construction is the following: if we are given a system of nonexpansive operators $u_\gamma:X_\gamma\to Y$, where $Y$ is a $p$-Banach space, which are compatible with the $f_\alpha^\beta$ in the sense that $u_\alpha=u_\beta\circ f_\alpha^\beta$ for $\alpha\leq \beta$, then there is a unique nonexpansive operator $u: \underrightarrow{\lim}\: X_\gamma \to Y$ such that $u\circ \imath_\alpha=u_\alpha$ for every $\alpha\in \Gamma$. That operator is often called the direct limit of the family $(u_\alpha)$.

\section{Construction of $p$-Banach spaces of almost universal disposition}\label{sec:main}

Let $\mathscr C$ be a class of quasi-Banach spaces.
Following \cite[Definition 2]{g}, let us say that a quasi-Banach space $U$ is of  almost universal disposition for the class $\mathscr C$ if, for  every $\e>0$ and for every isometry $g:X\to Y$, where $Y$ belongs to $\mathscr C$ and $X$ is a subspace of $U$, there is an $\e$-isometry $f:Y\to U$ such that $f(g(x))=x$ for all $x\in X$.

Here is the  main result of the paper.

\begin{theorem}\label{main}
For every $p\in(0,1]$ there exists a unique, up to isometries, separable $p$-Banach space of almost universal disposition for finite-dimensional $p$-Banach spaces. This space contains an isometric copy of every separable $p$-Banach space.
\end{theorem}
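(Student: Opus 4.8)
The plan is to construct $\Gp$ as a direct limit of finite-dimensional $p$-Banach spaces using the push-out machinery from Sections~\ref{sec:po} and~\ref{sec:direct}, and then to verify separately that it has almost universal disposition and contains a copy of every separable $p$-Banach space. First I would set up a suitable \emph{countable} amalgamation scheme. Since the class of finite-dimensional $p$-Banach spaces is not itself countable, the essential preliminary observation is that it is separable in the appropriate Banach--Mazur sense: up to $\e$-isometry there are only countably many isometric embeddings $g:X\to Y$ of finite-dimensional spaces, so one can fix a countable family of ``test'' amalgamation data that is dense among all such embeddings. This is the step I expect to be the main obstacle, because one must make precise a metric (a $p$-normed analogue of the Banach--Mazur distance) on isometric embeddings of finite-dimensional spaces and check that a countable dense set of amalgamation problems suffices to force the full extension property in the limit; the $p$-homogeneity of the norm should make the relevant compactness and approximation arguments go through as in the Banach case, but the constants must be tracked carefully.

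Granting this, I would build an increasing chain $X_0\subseteq X_1\subseteq\cdots$ of finite-dimensional spaces, with isometric bonding maps $f_n^{n+1}$, by recursively resolving the enumerated amalgamation problems. At stage $n$, given an isometric embedding $g:X\to Y$ with $X$ a subspace of the current $X_n$, I would form the push-out of $g$ along the inclusion $X\hookrightarrow X_n$ as in Diagram~\ref{PO}; by the Lemma of Section~\ref{sec:po}, since $g$ is an isometry and the inclusion is nonexpansive, the induced map $X_n\to\PO$ is an isometry, so $X_n$ embeds isometrically into the next space $X_{n+1}:=\PO$, and $Y$ maps into $X_{n+1}$ compatibly with $g$. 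A standard bookkeeping device (a pairing function enumerating all tasks and revisiting each cofinally often) guarantees that every required amalgamation is eventually performed. Setting $\Gp:=\underrightarrow{\lim}\,X_n$ then yields a separable $p$-Banach space by construction.

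To check almost universal disposition, suppose $g:X\to Y$ is an isometry with $Y$ finite-dimensional and $X$ a subspace of $\Gp$. Since $X$ is separable and the chain is dense in $\Gp$, I would first approximate: push $g$ to a nearby amalgamation problem over some finite-dimensional $X_n$, i.e. replace $X$ by a finite-dimensional subspace $X'\subseteq X_n$ that is $\delta$-close to containing a suitable finite set of $X$, so that the approximate problem appears (up to the fixed countable dense family) among the tasks resolved at some later stage. The resolution provides an honest isometry $Y\to X_m\subseteq\Gp$ agreeing with $g$ on $X'$, and by choosing $\delta$ small relative to $\e$ and composing with small perturbations one obtains the desired $\e$-isometry $f:Y\to\Gp$ with $f\circ g=\mathrm{id}$ on $X$. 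Here the error bounds must be assembled so that the accumulated perturbations stay below $\e$, which again is where the quantitative care lies.

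Finally, universality follows formally. Given a separable $p$-Banach space $E$, write it as the closure of an increasing union of finite-dimensional subspaces $E_1\subseteq E_2\subseteq\cdots$. Using almost universal disposition repeatedly, I would build isometries $E_k\to\Gp$ that are increasingly compatible, each extending the previous one up to a summable error $\e_k$; since $\Gp$ is complete and the errors are summable, the maps converge to an isometry $E\to\Gp$. The uniqueness assertion I would defer to Section~\ref{sec:uni}, proving it by a back-and-forth argument between two spaces of almost universal disposition, which is the standard \fra-type technique and again relies only on the extension property together with completeness.
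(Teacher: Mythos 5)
Your overall strategy --- a countable dense family of finite-dimensional amalgamation problems resolved along a chain via push-outs, followed by a back-and-forth for uniqueness --- is the same as the paper's, and the countable dense family you single out as the main obstacle is obtained rather easily via rational $p$-normed spaces and a compactness argument (Remark~\ref{RmRationsl}); that is not where the difficulty lies. The genuine gap is in how the amalgamation tasks are anchored to the chain. Your tasks are isometric embeddings $g:X\to Y$ with $X$ a subspace of the current stage $X_n$. But when verifying almost universal disposition, the given $X$ sits in the limit, not in any $X_n$; after approximating by a map $w:X\to X_n$ with $\|w(x)-x\|\leq\delta\|x\|$, the resulting problem is anchored by a map that is only a $\delta$-isometry, hence is not among your enumerated tasks, and the push-out lemma of Section~\ref{sec:po} as you invoke it (one leg an isometry, the other nonexpansive) does not directly produce the required almost-isometric extension. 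The paper closes exactly this gap by making the tasks \emph{pairs} $(u,t)$, with $u$ an abstract isometric embedding and $t$ a nonexpansive, possibly merely $\e$-isometric, operator into $G_n$ (the families $\mathfrak L_n$ in Lemma~\ref{contains}), and by proving in Lemma~\ref{po} that the push-out extension of an $\e$-isometry is again an $\e$-isometry. Some such device is indispensable and is absent from your outline.

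The same issue undermines your universality argument: almost universal disposition extends only along genuine isometries defined on genuine subspaces of $\Gp$, whereas your partial maps $E_k\to\Gp$ are only $\e_k$-isometries, so you cannot ``extend the previous one up to a summable error'' without first converting an $\e$-isometry into an honest isometry into a slightly enlarged space; that is precisely what the renorming $X\oplus_f^\e Y$ of Lemma~\ref{key} and its consequence Lemma~\ref{helpful} accomplish. (Granting that device, your limiting argument is fine: a pointwise limit of $\e_k$-isometries with $\e_k\to 0$ is an isometry.) The paper sidesteps universality-via-extension entirely by starting the chain of Lemma~\ref{contains} at $G_0=S$ for an arbitrary separable $p$-Banach space $S$, so that $\mathbb G_p(S)$ contains $S$ isometrically by construction, and then invoking uniqueness (Theorem~\ref{completes}) to identify $\mathbb G_p(S)$ with $\Gp$; you may find that route cleaner. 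A minor structural difference: the paper's chain consists of separable, generally infinite-dimensional spaces, each stage resolving all tasks of the current level simultaneously through a single push-out of $\ell_p$-sums, rather than finite-dimensional spaces handling one task at a time; both bookkeeping schemes can be made to work.
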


From now on we fix $p\in(0,1]$ once and for all.
We remark that everything in this paper is well-known for $p=1$.
However, the spaces we shall construct have rather unexpected properties when $p<1$ and shed some light on a widely ignored paper by Kalton \cite{k78}, where one can find a forerunner of our construction; see Proposition~\ref{inj} below.

Concerning the last statement of Theorem~\ref{main}, it is perhaps worth noticing that, while it is well-known that the separable Banach space $C[0,1]$ (as well as $\mathbb G$) contains an isometric copy of every separable Banach space, there is no available proof of the corresponding fact for $p$-Banach spaces for $p<1$. In \cite[Theorem 4.1(a)]{k77} it is stated without proof that for $0<p<1$ there exists a separable $p$-Banach space which is ``universal'' for the class of all separable $p$-Banach spaces. This result also appears in \cite[Theorem 3.2.8]{r} but, as far as we can understand, the rather involved  proof
only gives ``universality with respect to $\e$-isometries''.


Before embarking into the proof of Theorem~\ref{main}, let us record the following remark.

\begin{lemma}\label{relax}
Let $U$ be a $p$-Banach space. We assume that for every $\e>0$ and every isometry $g:X\to Y$, where $Y$ is a finite-dimensional $p$-Banach space and $X$ is a subspace of $U$, there is an $\e$-isometry $f:Y\to U$ such that $\|f(g(x))-x\|\leq \e\|x\|$ for all $x\in X$.

Then $U$ is of almost universal disposition for finite-dimensional $p$-Banach spaces.
\end{lemma}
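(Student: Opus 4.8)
The plan is a perturbation argument: I would take the approximate solution furnished by the hypothesis and correct it so that it fixes $X$ exactly, at a cost that is negligible for the isometry constant. First I would note that, $g$ being an isometry into the finite-dimensional space $Y$, it is injective and $X$ is isometric to $g(X)\subseteq Y$; in particular $X$ is itself finite-dimensional, which is what makes the correction step controllable.

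Fix a target accuracy $\delta>0$ and an isometry $\map{g}{X}{Y}$ as in the definition of almost universal disposition. Applying the hypothesis with a small $\e$ (to be chosen at the end in terms of $\delta$) yields an $\e$-isometry $\map{\phi}{Y}{U}$ with $\norm{\phi(g(x))-x}\le\e\norm{x}$ for all $x\in X$. The failure of $\phi$ to fix $X$ is measured by the linear map $\map{\psi_0}{g(X)}{U}$ given by $\psi_0(g(x))=x-\phi(g(x))$, which satisfies $\norm{\psi_0}\le\e$ on $g(X)$ because $g$ is an isometry. To repair $\phi$ I would add this defect back, but $\psi_0$ lives only on $g(X)$, so I extend it to all of $Y$: since $g(X)$ is finite-dimensional it is the range of a bounded linear projection $\map{P}{Y}{g(X)}$, and $\psi:=\psi_0\cmp P$ is a linear extension with $\norm{\psi}\le\e\lambda$, where $\lambda=\norm{P}$ depends only on the fixed pair $(Y,g(X))$.

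Setting $f=\phi+\psi$, one gets $f(g(x))=\phi(g(x))+\psi_0(g(x))=x$ for every $x\in X$, so $f$ fixes $X$ exactly. That $f$ is a $\delta$-isometry for small $\e$ follows from the $p$-triangle inequality: for $y\in Y$ one has $\norm{f(y)}^p\le\norm{\phi(y)}^p+\norm{\psi(y)}^p\le\bigl((1+\e)^p+(\e\lambda)^p\bigr)\norm{y}^p$, and symmetrically $\norm{f(y)}^p\ge\norm{\phi(y)}^p-\norm{\psi(y)}^p\ge\bigl((1-\e)^p-(\e\lambda)^p\bigr)\norm{y}^p$, where both coefficients tend to $1$ as $\e\to0$. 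The only place demanding care—rather than ingenuity—is the extension step: the projection constant $\lambda$ may be large, but it is fixed once $Y$ and $g(X)$ are prescribed, so it is harmlessly absorbed by choosing $\e$ small enough (depending on $\delta$ and $\lambda$) at the very end.
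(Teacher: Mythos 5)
Your argument is correct and is essentially the paper's own proof: the paper's one-line justification (a linear map that is small on a basis of $Y$ has small norm) encodes exactly your correction term $\psi=\psi_0\cmp P$, obtained there by extending a basis of $g(X)$ to a basis of $Y$ and setting the defect to zero on the complementary vectors. One small point of hygiene: the bounded projection $P$ exists because $Y$ is finite-dimensional (so every linear projection onto $g(X)$ is automatically bounded), not merely because $g(X)$ is --- in a general $p$-Banach space with $p<1$ finite-dimensional subspaces need not be complemented --- but this does not affect your proof.
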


\begin{proof}
This obviously follows from the fact that if  $B$ is a basis of $Y$, then for every $\e>0$ there is $\delta$ (depending on $\e$ and $B$) such that if $t:Y\to U$ is linear map with $\|t(b)\|\leq \delta$ for every $b\in B$, then $\|t\|\leq \e$.
\end{proof}

The following result, which should be compared to \cite[Lemma~4.2]{k78} and the construction in \cite[Section 3]{accgm}, is the key step in our construction. It is assumed that the families  $\frak J$ and  $\frak L$ are actually sets.

\begin{lemma}\label{po}
Let $E$ be a $p$-Banach space, $\frak J$ be a family of isometric embeddings between $p$-Banach spaces and $\frak L$ a family of non-expansive operators (i.e. $\|f\| \leq 1$ for every $f \in \frak L$) from $p$-Banach spaces into $E$. Then there is a $p$-Banach space $E'$ and an isometry $\imath: E\to E'$ having the following property: if $u: A\to B$ is in $\frak J$ and $f:A\to E$ is in $\frak L$, then there is $f':B\to E'$ such that $f'\circ u=\imath\circ f$, with $\|f'\|=\|f\|$. Moreover, if $f$ is an $\e$-isometry, then $f'$ is an $\e$-isometry too.
\end{lemma}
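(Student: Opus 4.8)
The plan is to realize all the required extensions simultaneously through a single push-out, after a renormalization that forces the operator norms to come out exactly right. Let $I$ be the set of all pairs $i=(u_i,f_i)$ with $u_i\colon A_i\to B_i$ in $\frak J$, $f_i\colon A_i\to E$ in $\frak L$, and $\dom u_i=\dom f_i=A_i$; this is a set because $\frak J$ and $\frak L$ are. Discarding the harmless indices with $f_i=0$, put $\lambda_i=\|f_i\|\in(0,1]$ and $g_i=\lambda_i^{-1}f_i$, so that $\|g_i\|=1$. I would form the $p$-sums $\mathcal A=\ell_p(I,A_i)$ and $\mathcal B=\ell_p(I,B_i)$, let $u=\bigoplus_i u_i\colon\mathcal A\to\mathcal B$ be the (isometric) diagonal of the $u_i$, and let $g\colon\mathcal A\to E$ be $g((a_i))=\sum_i g_i(a_i)$. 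The $p$-triangle inequality gives $\|g((a_i))\|^p\le\sum_i\|g_i(a_i)\|^p\le\sum_i\|a_i\|^p$, so $\|g\|\le1$. I would then define $E'=\PO(u,g)$ and take $\imath=u'\colon E\to E'$; since $u$ is an isometry and $\|g\|\le1$, the push-out lemma above guarantees that $\imath$ is an isometry.

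Next I would recover the individual extensions. Writing $g'=v'\colon\mathcal B\to E'$ for the other (nonexpansive) push-out map and $\jmath_i\colon B_i\to\mathcal B$ for the canonical inclusion, set $f'_i=\lambda_i\,(g'\cmp\jmath_i)$, and $f'_i=0$ for a discarded index. Restricting the push-out identity $g'\cmp u=\imath\cmp g$ to the $i$-th summand gives $g'(\jmath_i(u_i(a)))=\imath(g_i(a))$, whence $f'_i\cmp u_i=\lambda_i\,\imath\cmp g_i=\imath\cmp f_i$, as required. For the norm, inserting the zero element in the infimum defining the quotient norm yields $\|f'_i(b)\|\le\lambda_i\|b\|$, so $\|f'_i\|\le\lambda_i$; on the other hand, as $\imath$ and $u_i$ are isometries, $\|f'_i(u_i(a))\|=\|\imath(f_i(a))\|=\|f_i(a)\|$, forcing $\|f'_i\|\ge\lambda_i$. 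Hence $\|f'_i\|=\|f_i\|$. Note that this exact equality really needs the renormalization: pushing out $f_i$ itself would send the directions of $B_i$ outside $u_i(A_i)$ with norm up to $1$ rather than $\lambda_i$.

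The delicate point, which I expect to be the main obstacle, is the preservation of $\e$-isometries, since it requires a genuine lower estimate for $g'$ instead of the trivial upper one. Using that the quotient norm is the distance to the (closure of the) subspace $\{(u(a),-g(a)):a\in\mathcal A\}$, one gets $\|g'(\jmath_i(b))\|^p=\inf_{(a_j)}\big(\|b-u_i(a_i)\|^p+\sum_{j\ne i}\|a_j\|^p+\|\sum_j g_j(a_j)\|^p\big)$. The coupling among coordinates can be removed by the $p$-triangle inequality in the form $\|g_i(a_i)\|^p\le\|\sum_j g_j(a_j)\|^p+\sum_{j\ne i}\|a_j\|^p$, which makes the off-diagonal terms cancel and yields $\|g'(\jmath_i(b))\|^p\ge\inf_{a\in A_i}(\|b-u_i(a)\|^p+\|g_i(a)\|^p)$, so that $\|f'_i(b)\|^p\ge\inf_{a}(\lambda_i^p\|b-u_i(a)\|^p+\|f_i(a)\|^p)$. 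Finally, if $f_i$ is an $\e$-isometry then $\lambda_i\ge1-\e$ and $\|f_i(a)\|\ge(1-\e)\|u_i(a)\|$, so the right-hand side is at least $(1-\e)^p\inf_{a}(\|b-u_i(a)\|^p+\|u_i(a)\|^p)\ge(1-\e)^p\|b\|^p$, the last step by $\|b\|^p\le\|b-c\|^p+\|c\|^p$. Combined with $\|f'_i(b)\|\le\lambda_i\|b\|\le\|b\|$, this shows $f'_i$ is an $\e$-isometry, which would complete the argument.
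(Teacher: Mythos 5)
Your construction has the same skeleton as the paper's: index the admissible pairs $(u,f)$ with $\dom u=\dom f$, form the $p$-sums $\ell_p(I,A_i)$ and $\ell_p(I,B_i)$, take the diagonal isometry and the summation operator into $E$, perform a single push-out, obtain $f'$ as the coordinate inclusion followed by the second push-out arrow, and get the lower estimate for the $\e$-isometry clause by exactly the same decoupling trick (the $p$-triangle inequality together with nonexpansiveness of the $g_j$ reduces the infimum over all coordinates to an infimum over the single coordinate $a\in A_i$). Your closing step $\lambda_i^p\|b-u_i(a)\|^p+\|f_i(a)\|^p\ge(1-\e)^p\bigl(\|b-u_i(a)\|^p+\|u_i(a)\|^p\bigr)\ge(1-\e)^p\|b\|^p$ is in fact a little cleaner than the paper's two-case analysis. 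The one genuine difference is the renormalization $g_i=\|f_i\|^{-1}f_i$ followed by $f_i'=\|f_i\|\,(g'\cmp\jmath_i)$, and your instinct about why it is needed is correct: the paper pushes out the unnormalized summation operator, and its argument only establishes that $f'$ is nonexpansive, not the stated equality $\|f'\|=\|f\|$ --- indeed, for $\|f\|<1$ and $b$ far from $u[A]$ the quotient norm of $(\jmath_i(b),0)+S$ in the unnormalized construction is of order $\|b\|$ rather than $\|f\|\|b\|$. So on this point your proof delivers exactly what the lemma asserts, whereas the paper's does not; the discrepancy is harmless there because every later application uses only nonexpansiveness and the $\e$-isometry clause. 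One pedantic caveat on your side: for a discarded index with $A_i=0$ but $B_i\ne 0$, the zero map $f_i$ is vacuously an $\e$-isometry while your $f_i'=0$ is not; defining $f_i'=g'\cmp\jmath_i$ (unscaled) for such indices repairs this degenerate case, which in any event never occurs in the paper's applications.
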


\begin{proof}
If $f:X\to Y$ is an operator, then we put $\dom(f) := X$ and $\cod(f) := Y$. Note that $\cod(f)$ may be larger than the range of $f$.
Set $\Gamma=\{(u,t)\in \mathfrak J\times \frak L: \dom(u)=\dom(t)\}$.
 We consider the spaces of $p$-summable families $\ell_p(\Gamma, \dom(u))$ and
$\ell_p(\Gamma, \cod(u))$. We have an isometry
$
\oplus\frak J: \ell_p(\Gamma, \dom(u))\to \ell_p(\Gamma, \cod(u))
$
given by $\oplus\frak J((x_{(u,t)})_{(u,t)\in \Gamma})= (u(x_{(u,t)}))_{(u,t)\in \Gamma}$.
In a similar vein, we can define a nonexpansive operator $\sum\frak L:  \ell_p(\Gamma, \dom(u))\to E$ by letting $\sum\frak L((x_{(u,t)})_{(u,t)\in \Gamma})=
\sum_{(u,t)\in \Gamma} t(x_{(u,t)})$. The notation is slightly imprecise because both operators depend on $\Gamma$.

Now we can consider the push-out diagram
\begin{equation}\label{lp}
\begin{CD}
 \ell_p(\Gamma, \dom(u))@> \oplus\frak J >> \ell_p(\Gamma, \cod(u))\\
@V \sum\frak L VV   @V (\sum\frak L)' VV\\
E @> (\oplus\frak J)' >>\PO
\end{CD}
\end{equation}
Let us see that the lower arrow does the trick so that we may take $E'=\PO$ and $\imath=(\oplus\frak J)'$. We already know that $(\oplus\frak J)'$ is an isometry and also that $(\sum\frak L)'$ is nonexpansive.

Fix $(v,s)$ in $\Gamma$. Put $X=\dom(v)=\dom(s)$ and $Y=\cod(v)$. Let $s'$ be the inclusion $\imath_{(v,s)}$ of $Y$ into the $(v,s)$-th coordinate of $\ell_p(\Gamma, \cod(u))$ followed by $(\sum\frak L)'$. As Diagram (\ref{lp}) is commutative, it is clear that $s'\circ v=(\oplus\frak J)'\circ s$ and also that $s'$ is nonexpansive.

Now suppose $s$ is an $\e$-isometry, that is, $(1-\e)\|x\|_X\leq \|s(x)\|_Y\leq \|x\|_X$ (recall that $s$ is nonexpansive). For $y\in Y$ one has
$$
\|s'(y)\|_{\PO}=\|(\imath_{(v,s)}(y),0)+S\|_{\ell_p(\Gamma, \cod(u))\oplus_p E},
$$
where $S=\{((\oplus\frak J)((x_{(u,t)})), -(\sum\frak L)((x_{(u,t)}))): (x_{(u,t)}))_{(u,t)\in \Gamma}\in\ell_p(\Gamma, \dom(u) ) \}$.

Clearly,
$$
\left\|\imath_{(v,s)}(y)-(u(x_{(u,t)}))_{(u,t)}\right\|^p_{\ell_p(\Gamma, \cod(u) )}+ \left\|\sum_{(u,t)\in\Gamma} t(x_{u,t})\right\|^p_E\geq \|y-v(x)\|_Y^p+\|s(x)\|_E^p,
$$
where $x=x_{(v,s)}$. Now, if $\|x\|_X\geq \|y\|_Y$ one has
$$
 \|y-v(x)\|_Y^p+\|s(x)\|_E^p\geq \|s(x)\|_E^p\geq (1-\e)^p\|x\|_X^p\geq (1-\e)^p\|y\|_Y^p.
$$
If  $\|x\|_X\leq \|y\|_Y$, then
\begin{align*}
 \|y-v(x)\|_Y^p+\|s(x)\|_E^p&\geq \|y\|_Y^p-\|v(x)\|_Y^p+(1-\e)^p\|x\|_X^p\\
&\geq \|y\|_Y^p-(1-(1-\e)^p)\|x\|_X^p\\
&\geq  (1-\e)^p\|y\|_Y^p.
\end{align*}
Thus, $
\|s'(y)\|_{\PO}\geq (1-\e)\|y\|_Y$ and $s'$ is a nonexpansive $\e$-isometry.
\end{proof}

\begin{lemma}\label{contains}
Every separable $p$-Banach space is isometric to a subspace of a separable $p$-Banach space of almost universal disposition.
\end{lemma}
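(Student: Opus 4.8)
The plan is to construct the required space as a direct limit of a tower obtained by iterating Lemma~\ref{po}, and then to verify almost universal disposition through Lemma~\ref{relax}.

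First I would fix, once and for all, a countable family $\mathcal U$ of isometric embeddings $u\colon A\to B$ between finite-dimensional $p$-Banach spaces that is \emph{dense} in the sense that every isometric embedding $g\colon X\to Y$ between finite-dimensional $p$-Banach spaces is approximated, after an arbitrarily small perturbation of the $p$-norms and of the map, by some member of $\mathcal U$. Such a family exists because, up to isometry, a finite-dimensional $p$-Banach space is $\K^k$ equipped with a $p$-norm, and the $p$-norms together with the rational linear embeddings form a separable parameter space; the usual small-perturbation lemmas for finite-dimensional spaces survive in the $p$-normed category, since the $p$-triangle inequality is available and all $p$-norms on $\K^k$ are equivalent.

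Next I would build inductively a chain $E=E_0\hookrightarrow E_1\hookrightarrow E_2\hookrightarrow\cdots$ of separable $p$-Banach spaces with isometries $\imath_n\colon E_n\to E_{n+1}$. Given $E_n$, fix a countable dense set $D_n\subseteq E_n$ and, for each domain $A$ occurring in $\mathcal U$, let $\frak L_n$ contain a countable family of nonexpansive $\e$-isometries $A\to E_n$ that is dense among all such maps (this is arranged using $D_n$ to parametrise the images of a basis of $A$). Applying Lemma~\ref{po} with $\frak J=\mathcal U$ and $\frak L=\frak L_n$ produces the isometry $\imath_n\colon E_n\to E_{n+1}$; because the index set is countable, every codomain is finite-dimensional, and $E_n$ is separable, the push-out $E_{n+1}$ is again separable. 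Let $U=\underrightarrow{\lim}\,E_n$ be the direct limit of Section~\ref{sec:direct}; it is separable, and since every $\imath_n$ is an isometry, $E=E_0$ embeds isometrically into $U$.

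Finally I would check almost universal disposition. By Lemma~\ref{relax} it suffices, given $\e>0$ and an isometry $g\colon X\to Y$ with $Y$ finite-dimensional and $X\subseteq U$ (so $X$ is automatically finite-dimensional, being isometric to a subspace of $Y$), to produce an $\e$-isometry $f\colon Y\to U$ with $\|f(g(x))-x\|\le\e\|x\|$. Since $\bigcup_n E_n$ is dense in $U$ and $X$ is finite-dimensional, I can approximate a basis of $X$ by elements of some $E_n$ and thereby obtain a nonexpansive $\e$-isometry $t_0\colon X\to E_n$ with $\|t_0(x)-x\|$ as small as desired. Approximating $g$ by some $u\colon A\to B$ in $\mathcal U$ and $t_0$ by some $t\colon A\to E_n$ in $\frak L_n$ (under the identifications of $X,Y$ with $A,B$), Lemma~\ref{po} applied at stage $n$ delivers a nonexpansive $\e$-isometry $t'\colon B\to E_{n+1}\subseteq U$ with $t'\circ u=\imath\circ t$. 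Setting $f$ to be $t'$ read as a map $Y\to U$ via $Y\approx B$, the identity $t'(u(a))=t(a)$ together with $u\approx g$ and $t\approx t_0\approx\mathrm{id}$ yields $f(g(x))\approx x$; tracking the accumulated errors against a fixed $\e$-budget gives the required estimate. The main obstacle is precisely this bookkeeping: one must organise a single countable family $\mathcal U$ of abstract embeddings together with the stagewise dense families $\frak L_n$, and then confirm that every extension task arising in the \emph{limit}—not merely at a finite stage—is handled up to $\e$, which forces the finite-dimensional perturbation estimates (moving a subspace of the completion into $\bigcup_n E_n$, approximating one $p$-norm by another, and approximating linear maps) to be carried out with explicit $p$-norm control and propagated through the push-out of Lemma~\ref{po}.
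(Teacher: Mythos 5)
Your proposal is correct and follows essentially the same route as the paper: fix a countable dense family of isometric embeddings between finite-dimensional $p$-normed spaces (realised via rational $p$-norms as in Remark~\ref{RmRationsl}), iterate Lemma~\ref{po} along a chain starting from the given separable space with stagewise countable dense families $\frak L_n$ of almost-isometric maps, pass to the direct limit, and verify the hypothesis of Lemma~\ref{relax} by approximating the data into a finite stage and invoking the extension property of the push-out. The error bookkeeping you defer at the end is exactly the short computation the paper carries out, yielding the bound $(1-\delta)^2\|y\|\leq\|f(y)\|\leq(1+\delta)^2\|y\|$ with $\delta=\sqrt{1+\e}-1$.
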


\begin{proof}
Let $\frak F$ be a countable family of isometries between finite-dimensional $p$-normed spaces having the following density property: for every isometry of finite-dimensional $p$-normed spaces $g:A\to B$ and every $\e\in(0,1)$ there is $f\in\frak F$ and surjective $\e$-isometries $u:A\to \dom(f)$ and $v:B\to\cod(f)$ making commutative the square
$$
\begin{CD}
A@>g>> B\\
@Vu VV @V v VV\\
\dom(f) @>> f >\cod(f)
\end{CD}
$$
Let $S$ be a separable $p$-Banach space.
We shall construct inductively a chain of separable $p$-Banach spaces based on the nonnegative integers
$$
\begin{CD}
G_0@>\imath_1>>\dots@>>> G_{n-1}@>\imath_n>> G_n@>\imath_{n+1}>> G_{n+1}@>>>\dots
\end{CD}
$$
as follows. We put $G_0=S$ and, assuming that $G_k$ and $\imath_k$ have been constructed for $k\leq n$,
we take a countable set of operators $\mathfrak L_n$ such that for every $\e\in(0,1)$, every $f\in\frak F$ and every $\e$-isometry $u:\dom(f)\to G_n$, there is $v\in \frak L_n$ satisfying $\|u-v\|<\e$.

Then, we apply Lemma~\ref{po} with $E=G_n, \mathfrak J=\mathfrak F, \mathfrak L=\mathfrak L_n$ and we set $G_{n+1}=E'$ and $\imath_{n+1}=\imath$.

Finally, we consider the direct limit
$$
\mathbb G_p(S)= \underrightarrow{\lim}\: G_n
$$
and we prove that it satisfies the hypothesis of Lemma~\ref{relax}. 

So suppose we are given an isometry $g:X\to Y$, where $Y$ is a finite-dimensional $p$-Banach space and $X$ is subspace of $\mathbb G_p(S)$ and $\e>0$.
We shall prove that there is an $\e$-isometry $f:Y\to \mathbb G_p(S)$ such that $\|f(g(x))-x\|\leq \e\|x\|$ for all $x\in X$.

Let us fix $\delta>0$. The precise value of $\delta$ required here will be announced later.

First, there is an integer $n$ and a linear map $w:X\to G_n$ such that $\|w(x)-x\|\leq\delta\|x\|$. Moreover, we may take $h\in \mathfrak F$ and $\delta$-isometries $u$ and $v$ making the following diagram commutative:
$$
\begin{CD}
\dom(h) @> h >>\cod(h)\\
@V u VV @VV v V\\
X @> g >>Y
\end{CD}
$$
In fact we can clearly assume that $t=w\circ u$ is in $\frak L_n$ and also that it is a $\delta$-isometry.

Let $t':\cod(h)\to G_{n+1}$ be a $\delta$-isometry extending $t$ and set $f=t'\circ v^{-1}$.
Obviously $\|f(g(x))-x\|=\|w(x)-x\|\leq\delta\|x\|$ for all $x\in X$. Moreover,
$$
(1-\delta)^2\|y\|\leq \|f(y)\|\leq (1+\delta)^2\|y\|\quad\quad(y\in Y)
$$
and therefore taking $\delta=\sqrt{1+\e}-1$ suffices.
\end{proof}

\begin{remark}\label{RmRationsl}
In order to obtain a countable family of isometries having the property required in the proof of Lemma~\ref{contains} one may proceed as follows.
Let us say that a vector in $\K^n$ is ``rational'' if its components are all rational -- here, a complex number is ``rational'' if both its real and imaginary parts are rational numbers. Let $x_1,\cdots, x_k$ be rational vectors spanning $\mathbb K^n$ and put
$$
|x|=\inf\left\{ \left(\sum_{i=1}^k |\lambda_i|^p\right)^{1/p} \colon x= \sum_{i=1}^k \lambda_i x_i	\right\}.
$$
Then $|\cdot|$ is a $p$-norm on $\mathbb K^n$ and  we say that $(\mathbb K^n, |\cdot|)$ is a \emph{rational $p$-normed space}.
Consider the family of those isometries $f$ whose codomain is a rational $p$-normed space $(\mathbb K^n,|\cdot|)$, its domain is $\mathbb K^m$ for some $m\leq n$, equipped with a (not necessarily rational) $p$-norm and having the form $f(x_1,\ldots, x_m)= (x_1,\ldots, x_m, 0,\ldots, 0)$. Then an obvious compactness argument shows that these are ``dense amongst all isometries''.
\end{remark}

\section{Uniqueness}\label{sec:uni}

The following result is the first step towards the proof of uniqueness in Theorem~\ref{main}. It is the $p$-convex analogue of \cite[Lemma 2.1]{ks}.
As the reader can imagine, the proof has to be different here since one needs to avoid the use of linear functionals to work with $p$-normed spaces.

\begin{lemma}\label{key}
Let $X$ and $Y$ be $p$-normed spaces and $f:X\to Y$ an $\e$-isometry, with $\e\in(0,1)$. Let $i:X\to X\oplus Y$ and $j:Y\to X\oplus Y$ be the canonical inclusions. Then there is a $p$-norm on $X\oplus Y$ such that $\|j\circ f - i\|\leq\e$ and both $i$ and $j$ are isometries.
\end{lemma}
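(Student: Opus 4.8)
The plan is to put on the algebraic direct sum $X\oplus Y$ an explicit ``mapping cylinder'' $p$-norm defined by infimal convolution, and then to verify the three required properties by hand. The motivation is that the constraints $\|(x,0)\|=\|x\|_X$, $\|(0,y)\|=\|y\|_Y$ and $\|(-x,f(x))\|\le\e\|x\|_X$ say that every vector should be cheaply built from the blocks $(x,0)$, $(0,y)$ and the ``cheap'' vectors $(z,-f(z))$ of cost at most $\e\|z\|_X$. Writing, for each $z\in X$,
$$
(x,y)=(x-z,0)+(z,-f(z))+(0,y+f(z))
$$
and feeding this into the hoped-for $p$-subadditivity leads me to \emph{define}
$$
\|(x,y)\|:=\inf_{z\in X}\bigl(\|x-z\|_X^p+\e^p\|z\|_X^p+\|y+f(z)\|_Y^p\bigr)^{1/p}.
$$
I claim this is the $p$-norm that works.

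Absolute homogeneity and $p$-subadditivity are routine: for the triangle inequality one picks near-optimal $z_1,z_2$ for the two summands and uses $z_1+z_2$ as a competitor for their sum. The only delicate point is \emph{positive definiteness}, and here I would identify $\|\cdot\|$ as a genuine quotient $p$-norm. Letting $\tilde X$ denote $X$ rescaled by $\e$, the linear surjection
$$
\Phi\colon X\oplus_p\tilde X\oplus_p Y\to X\oplus Y,\qquad \Phi(a,z,b)=(a+z,\,b-f(z)),
$$
has preimages $a=x-z$, $b=y+f(z)$ over $(x,y)$, so its quotient seminorm is exactly $\|\cdot\|$. Its kernel $\{(-z,z,f(z)):z\in X\}$ is closed, because $f$ is bounded (indeed $\|f(z)\|_Y\le(1+\e)\|z\|_X$) and the middle coordinate recovers $z$; hence the quotient carries a true $p$-norm and $\|\cdot\|$ is definite.

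It remains to check the three numerical requirements, each of which reduces to subadditivity of $t\mapsto t^p$. For $\|(x,0)\|=\|x\|_X$, the competitor $z=0$ gives ``$\le$'', while for ``$\ge$'' I use $\|f(z)\|_Y\ge(1-\e)\|z\|_X$ together with the elementary inequality $\e^p+(1-\e)^p\ge 1$, so that $\e^p\|z\|^p+\|f(z)\|^p\ge\|z\|^p$ and therefore $\|x-z\|^p+\e^p\|z\|^p+\|f(z)\|^p\ge\|x-z\|^p+\|z\|^p\ge\|x\|^p$. For $\|(0,y)\|=\|y\|_Y$, again $z=0$ gives ``$\le$'', and ``$\ge$'' follows from $\|y+f(z)\|^p\ge\|y\|^p-\|f(z)\|^p$, the bound $\|f(z)\|\le(1+\e)\|z\|$ and the inequality $(1+\e)^p\le 1+\e^p$. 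Finally, since $(j\circ f-i)(x)=(-x,f(x))$, the competitor $z=-x$ makes the first and third terms vanish (by linearity $f(x)+f(-x)=0$), leaving precisely $\e^p\|x\|_X^p$; hence $\|(-x,f(x))\|\le\e\|x\|_X$ and $\|j\circ f-i\|\le\e$. The genuine obstacle here is conceptual rather than computational: one must \emph{find} the right infimal-convolution formula and justify definiteness through the quotient-by-a-closed-subspace argument, since linear functionals — the natural tool in the Banach case — are unavailable for $p<1$. Once the formula is in place, every estimate is a one-line use of $p$-convexity.
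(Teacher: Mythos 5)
Your proof is correct and is essentially the paper's own argument: the infimal formula you write down is exactly the paper's definition of $X\oplus_f^\e Y$ (the decomposition $(x,y)=(x_0,0)+(0,y_1)+(x_2,-f(x_2))$ is your one-parameter family indexed by $z=x_2$), and the three verifications use the same subadditivity inequalities $\e^p+(1-\e)^p\geq 1$ and $(1+\e)^p\leq 1+\e^p$. Your quotient-by-a-closed-subspace justification of definiteness is a welcome elaboration of a step the paper dismisses as ``easily seen,'' but it does not change the route.
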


\begin{proof}Put
\begin{equation}\label{put}
\|(x,y)\|^p=\inf\left\{\|x_0\|_X^p+\|y_1\|_Y^p+\e^p\|x_2\|_X^p: (x,y)=(x_0,0)+(0,y_1)+(x_2,-f(x_2))		\right\}.
\end{equation}
It is easily seen that this formula defines a $p$-norm on $X\oplus Y$. Let us check that $\|(x,0)\|=\|x\|_X$ for all $x\in X$. The inequality  $\|(x,0)\|\leq \|x\|_X$ is obvious. As for the converse,  suppose $x=x_0+x_2$ and $y_1=f(x_2)$. Then
\begin{align*}
\|x_0\|_X^p+\|y_1\|_Y^p+\e^p\|x_2\|_X^p&= \|x_0\|_X^p+\|f(x_2)\|_Y^p+\e^p\|x_2\|_X^p\\
&\geq  \|x_0\|_X^p+(1-\e)^p\|x_2\|_X^p+\e^p\|x_2\|_X^p\\
&=  \|x_0\|_X^p+\|(1-\e)x_2\|_X^p+\|\e x_2\|_X^p\\
&\geq \|x\|_X^p,
\end{align*}
as required.

Next we prove that $\|(0,y)\|=\|y\|_Y$ for every $y\in Y$. That  $\|(0,y)\|\leq\|y\|_Y$ is again obvious. To prove the reversed inequality assume $x_0+x_2=0$ and $y=y_1-f(x_2)$. As $t\to t^p$ is subadditive on $\R_+$ for $p\in(0,1]$, we have
\begin{align*}
\|x_0\|_X^p+\|y_1\|_Y^p+\e^p\|x_2\|_X^p&= \|x_2\|_X^p+\|y_1\|_Y^p+\e^p\|x_2\|_X^p\\
&=  \|y_1\|_Y^p+(1+\e^p)\|x_2\|_X^p\\
&\geq  \|y_1\|_Y^p+(1+\e)^p\|x_2\|_X^p\\
&\geq \|y_1\|_Y^p+\|f(x_2)\|_Y^p\\
&\geq \|y\|_Y^p.
\end{align*}
To end, let us estimate $\|j\circ f- i\|$. We have
$$
\|j\circ f- i\|=\sup_{\|x\|\leq 1}\|j(f(x))-i(x)\|= \sup_{\|x\|\leq 1}\|(-x,f(x))\|\leq \e
$$
and we are done.
\end{proof}

From now on, $X\oplus_f^\e Y$ will denote the sum space $X\oplus Y$ furnished with the quasinorm defined by (\ref{put}). The fact that the quasinorm depends, not only on $f$ and $\e$, but also on $p$ will cause no confusion.

A linear operator $f:X\to Y$ is called a strict $\e$-isometry if for every nonzero $x\in X$,
 $$(1-\e)\|x\|_X< \|f(x)\|_Y< (1+\e)\|x\|_X,$$ where $\e\in(0,1)$.
Note that when $X$ is finite-dimensional, every strict $\e$-isometry is an $\eta$-isometry for some $\eta < \e$.

\begin{lemma}\label{helpful}
Let $U$ be a $p$-Banach space of almost universal disposition for finite-dimensional $p$-Banach spaces and let $f:X\to Y$ be a strict $\e$-isometry, where $Y$ is a finite-dimensional $p$-Banach space, $X$ is a subspace of $U$ and $\e\in(0,1)$. Then for each $\delta>0$ there exists a $\delta$-isometry $g:Y\to U$ such that $\| g(f(x))-x	\|< \e\|x\|$ for every nonzero $x\in X$.
\end{lemma}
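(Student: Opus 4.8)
The plan is to reduce the statement to the defining extension property of $U$ by \emph{straightening} the strict $\e$-isometry $f$ into a genuine isometry, for which Lemma~\ref{key} is tailor-made. The first thing I would observe is that $X$ is automatically finite-dimensional: since $\e<1$, a strict $\e$-isometry satisfies $\|f(x)\|\geq(1-\e)\|x\|>0$ for $x\neq 0$, so $f$ is injective and $\dim X\leq\dim Y<\infty$. Consequently, by the observation that a strict $\e$-isometry on a finite-dimensional space is an $\eta$-isometry for some $\eta<\e$, we may fix such an $\eta$, which we can clearly take in $(0,\e)$.

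Next I would apply Lemma~\ref{key} to the $\eta$-isometry $f$, equipping $X\oplus Y$ with a $p$-norm and obtaining the finite-dimensional space $Z=X\oplus_f^\eta Y$ in which the canonical inclusions $i\colon X\to Z$ and $j\colon Y\to Z$ are both isometries and $\|j\circ f-i\|\leq\eta$, that is, $\|j(f(x))-i(x)\|_Z\leq\eta\|x\|_X$ for every $x\in X$. The point of this step is that $i$ is now an honest isometry from the subspace $X\subseteq U$ into the \emph{finite-dimensional} space $Z$, which is exactly the input required by almost universal disposition.

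I would then invoke the hypothesis on $U$: for the isometry $i\colon X\to Z$ (with $X$ a subspace of $U$ and $Z$ finite-dimensional) and a parameter $\theta>0$ to be fixed, there is a $\theta$-isometry $F\colon Z\to U$ with $F(i(x))=x$ for all $x\in X$. Setting $g=F\circ j\colon Y\to U$, the map $g$ is a $\theta$-isometry because $j$ is an isometry; moreover, for $x\neq 0$,
\[
\|g(f(x))-x\|_U=\|F(j(f(x)))-F(i(x))\|_U=\|F\bigl(j(f(x))-i(x)\bigr)\|_U\leq(1+\theta)\,\eta\,\|x\|_X,
\]
using linearity of $F$, the identity $F(i(x))=x$, and the upper estimate for $\theta$-isometries. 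It therefore suffices to choose $\theta\leq\delta$ small enough that $(1+\theta)\eta<\e$, which is possible precisely because $\eta<\e$; then $g$ is a $\delta$-isometry with $\|g(f(x))-x\|<\e\|x\|$ for every nonzero $x$, as required.

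There is no genuine obstacle once the two ingredients are identified; the one thing one must notice is that the corrected norm supplied by Lemma~\ref{key} lives on $X\oplus Y$ rather than on $Y$ alone, so that the isometry handed to the extension property is $i$ (not $f$), while $g$ is recovered by precomposing the resulting extension $F$ with $j$. The strict inequality in the conclusion is what forces the use of $\eta<\e$ instead of $\e$ itself: had we worked with $X\oplus_f^\e Y$, the estimate would have degraded to $(1+\theta)\e\|x\|$, which cannot be kept below $\e\|x\|$.
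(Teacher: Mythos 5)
Your proof is correct and follows essentially the same route as the paper's: straighten $f$ into the isometry $i:X\to X\oplus_f^\eta Y$ via Lemma~\ref{key} with $\eta<\e$, apply almost universal disposition to $i$, and set $g=F\circ j$. The only immaterial difference is that the paper lets the extension fix $X$ only up to an error $\delta\|x\|$ and combines the two error terms via the $p$-triangle inequality (requiring $\delta^p+(1+\delta)^p\eta^p<\e^p$), whereas you use the exact fixing $F(i(x))=x$ provided by the definition, which simplifies the final estimate to $(1+\theta)\eta<\e$.
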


\begin{proof}
Choose $0<\eta<\e$ such that $f$ is an $\eta$-isometry. Shrinking $\delta$ if necessary, we may assume that $\delta^p+(1+\delta)^p\eta^p<\e^p$. Set $Z= X\oplus_f^\eta Y$ and let $i:X\to Z$ and $j:Y\to Z$ denote the canonical inclusions, so that $\|j\circ f-i\|\leq \eta$. Let $h:Z\to U$ be a $\delta$-isometry such that $\|h(i(x))-x\|\leq\delta\|x\|$ for $x\in X$. Then $g=h\circ j$ is a $\delta$-isometry from $Y$ into $U$ and we have
\begin{align*}
\|x-g(f(x))\|^p &\leq \|x-h(i(x))\|^p+\|h(i(x))-h(j(f(x))\|^p\\
&\leq \delta^p\|x\|^p+(1+\delta)^p\|i(x)-j(f(x))\|_Z^p\\
&\leq (\delta^p+(1+\delta)^p\eta^p)\|x\|^p < \e^p \|x\|^p,
\end{align*}
as required.
\end{proof}

We are now ready for the proof of the uniqueness. Note that the following result, together with Lemma~\ref{contains}, completes the proof of
Theorem~\ref{main}.

\begin{theorem}\label{completes}
Let $U$ and $V$ be separable $p$-Banach spaces of  almost universal disposition for finite-dimensional $p$-Banach spaces. Let $f:X\to V$ be a strict $\e$-isometry, where $X$ is a finite-dimensional subspace of $U$ and $\e\in(0,1)$. Then there exists a bijective
isometry $h:U\to V$ such that $\|h(x)-f(x)\|_V\leq \e\|x\|_U$ for every
$x\in X$. In particular, $U$ and $V$ are isometrically isomorphic.
\end{theorem}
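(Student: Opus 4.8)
The plan is to run a perturbative back-and-forth argument between $U$ and $V$, producing a sequence of finite-dimensional bijective strict isometries whose limit is the desired $h$. Since both spaces are separable, I would fix dense sequences $(a_i)$ in $U$ and $(b_j)$ in $V$, and build finite-dimensional subspaces $X_n\subseteq U$, $Y_n\subseteq V$ together with bijective strict $\e_n$-isometries $f_n\colon X_n\to Y_n$, starting from $X_0=X$, $Y_0=f(X)$, $f_0=f$, with $\e_n\downarrow 0$ chosen to decay fast. Both elementary moves are instances of Lemma~\ref{helpful}. In a \emph{forth} move I enlarge the range: regarding $f_n$ as a strict isometry from $X_n\subseteq U$ into $Y_n'=Y_n+\langle b_j\rangle$ and applying Lemma~\ref{helpful} with ambient space $U$ yields a $\delta$-isometry $g\colon Y_n'\to U$ with $\|g(f_n(x))-x\|<\e_n\|x\|$; setting $X_{n+1}=g(Y_n')$ and $f_{n+1}=g^{-1}$ puts $b_j$ into the range. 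In a \emph{back} move I enlarge the domain: applying Lemma~\ref{helpful} with ambient space $V$ to the strict isometry $f_n^{-1}\colon Y_n\to X_n+\langle a_i\rangle$ produces an extension $f_{n+1}\colon X_n+\langle a_i\rangle\to V$ that is close to $f_n$ on $X_n$. Alternating the two moves while exhausting $(a_i)$ and $(b_j)$ forces $\bigcup_n X_n$ to be dense in $U$ and $\bigcup_n Y_n$ dense in $V$.

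Next I would pass to the limit. Having arranged $\e_n\to 0$ and the step-to-step discrepancies to be $p$-summable, the maps $f_n$ converge, on a dense set, to a well-defined map $h$ which is then a (nonsurjective a priori) isometry of $U$ into $V$ because it is a limit of $\e_n$-isometries with $\e_n\to0$; density of the ranges $Y_n$ in $V$ makes $h$ surjective, hence a bijective isometry. For the closeness to $f$ I would make the very first move a forth move, so that Lemma~\ref{helpful} gives the clean bound $\|g(f(x))-x\|<\e\|x\|$; since $f$ is a \emph{strict} $\e$-isometry and $X$ is finite-dimensional, compactness of the unit sphere of $X$ upgrades this to $\|g(f(x))-x\|\le\theta\|x\|$ for some fixed $\theta<\e$. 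Writing $x_1=g(f(x))$, one has $f_1(x_1)=f(x)$ exactly, while the representative of $x$ has drifted by at most $\theta\|x\|$. Choosing all later tolerances so small that $\theta^p+(\text{tail})<\e^p$ then yields $\|h(x)-f(x)\|\le\e\|x\|$ for $x\in X$, which is exactly the required estimate; the strictness hypothesis is used precisely to create the gap $\theta<\e$ that absorbs the later perturbations.

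The main obstacle is that the forth and back moves do not produce honestly nested domains: a forth move replaces $X_n$ by the nearby space $X_{n+1}=g(Y_n')$, so the $f_n$ only \emph{approximately} extend one another, and the original $X$ need not sit inside later domains. The work therefore lies in the Cauchy bookkeeping that makes ``approximate zig-zag'' converge. Concretely, for each fixed vector one must track its drifting copies $x_n\in X_n$ and control $\sum_n\|x_{n+1}-x_n\|^p$ and $\sum_n\|f_{n+1}(x_{n+1})-f_n(x_n)\|^p$, using the $p$-norm inequality throughout and completeness of $U$ and $V$ to guarantee that both the points and their images have limits. This is where the quantitative content of Lemma~\ref{helpful} and the careful a priori choice of the sequence $(\e_n)$ (ensuring simultaneously $\e_n\to0$, summable drift, density of domains and ranges, and the step-$0$ gap) must be combined; once these estimates are in place, the verification that $h$ is a surjective isometry and satisfies $\|h(x)-f(x)\|_V\le\e\|x\|_U$ is routine.
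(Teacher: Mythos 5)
Your strategy is, in essence, the paper's: a back-and-forth alternation of applications of Lemma~\ref{helpful} with geometrically decaying tolerances $\e_n$, a first tolerance $\eta<\e$ extracted from strictness so that the total accumulated drift stays below $\e$, and a limit map obtained from Cauchy estimates in the $p$-norm. The one substantive difference is that the paper arranges the domains and ranges to be \emph{honestly nested} --- its condition (5) reads $X_n\subset X_{n+1}$, $Y_n\subset Y_{n+1}$ --- which is always possible because Lemma~\ref{helpful} places no restriction on how large the finite-dimensional codomain is: at each step one takes $X_{n+1}$ to be any finite-dimensional subspace of $U$ containing $X_n$, $g_n[Y_n]$ and the next point of a dense sequence, and likewise for $Y_{n+1}$. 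The maps $f_n$ then still do not extend one another, but they live on a common increasing chain, so $h(x)=\lim_n f_n(x)$ is automatically well defined and linear on the dense subspace $\bigcup_n X_n$. Surjectivity is obtained not from density of the ranges but by constructing the reverse maps $g_n:Y_n\to X_{n+1}$ simultaneously and checking from conditions (3) and (4) that the limit $g$ satisfies $g\circ h=\mathrm{id}_U$ and $h\circ g=\mathrm{id}_V$.

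The place where your version has a genuine gap is precisely the ``drifting copies'' issue you flag but do not resolve. If a forth move sets $X_{n+1}=g(Y_n')$, a vector $u\in X_m$ has no canonical representative at later stages, only an orbit under the composite linear drift maps $\sigma_n$, and $h$ must be defined as $\lim_n f_n(\sigma_{n-1}\cdots\sigma_m(u))$. Summability of $\e_n^p$ makes this limit exist, but you must then check (i) that the value does not depend on the stage $m$ at which $u$ first appears (a vector may lie in two non-nested domains, and the two orbits need not coincide), (ii) that the resulting map is linear across different $X_m$'s, and (iii) that surjectivity survives the drift of the ranges: since $Y_n\not\subset h(U)$, ``density of the $Y_n$'' is not enough, and one needs closedness of $h(U)$ as the isometric image of a complete space together with a diagonal density argument. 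All of this can be carried out, but none of it is in the sketch, and all of it disappears if you nest the domains as the paper does. Relatedly, the insistence that each $f_n$ be bijective onto $Y_n$ is unnecessary and is what forces the inversion $f_{n+1}=g^{-1}$ that creates the drift in the first place.
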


\begin{proof}
Fix $0<\eta <\e $ such that $f$ is an $\eta $-isometry and then choose $0<\lambda <1$ such that
\begin{equation}
\eta^p \frac{1+3\lambda^p}{1-\lambda^p }<\e^p.
\tag{$\star $}\label{star}
\end{equation}
Let $\e_n=\lambda^{n} \eta$. We define inductively sequences of linear operators $(f_n), (g_n)$ and finite-dimensional subspaces $(X_n)$, $(Y_n)$ of $U$ and $V$, respectively, so that the following conditions are satisfied:
\begin{enumerate}
    \item[(0)] $X_0 = X$, $Y_0 = f[X]$, and $f_0 = f$;
    \item[(1)] $f_n:X_n\to Y_n$ is an $\e_n$-isometry;
    \item[(2)] $g_n:Y_n\to X_{n+1}$ is an $\e_{n+1}$-isometry;
    \item[(3)] $\|g_n f_n(x) - x\| < \e_n  \| x\|$ for $x\in X_n$;
    \item[(4)] $\|f_{n+1}g_n(y) - y\| < \e_{n+1} \| y\|$ for $y\in Y_n$;
    \item[(5)] $X_n\subset X_{n+1}$, $Y_n\subset Y_{n+1}$, $\bigcup _n X_n$ and $\bigcup _n Y_n$ are dense in $U$ and $V$, respectively.
\end{enumerate}
We use condition (0) to start the inductive construction. Suppose that $f_i$, $X_i$, $Y_i$, for $i\leq n$, and $g_i$ for $i<n$, have been constructed. We easily find $g_n$, $X_{n+1}$, $f_{n+1}$ and $Y_{n+1}$ using Lemma~\ref{helpful}.

 To guarantee that Condition (5) holds, we may start by taking sequences $(x_n)$ and $(y_n)$ dense in $U$ and $V$, respectively and then we require first that $X_{n+1}$ contains both $x_n$ and $g_n[Y_n]$ and then that $Y_{n+1}$ contains both $y_n$ and $f_{n+1}[X_{n+1}]$. 

After that, fix $n \in \omega$ and $x \in X_n$ with $\|x\|=1$. Using (4), we
get
$$\| f_{n+1} g_n f_n(x) - f_n(x) \|^p < \e_{n+1}^p \cdot \|f_n(x)\|^p \leq  \e_{n+1}^p\cdot (1 + \e_{n})^p=(\lambda^{n+1}\eta)^p \cdot(1+\lambda^{n}\eta)^p.$$
Using (3), we get
$$\| f_{n+1} g_n f_n(x) - f_{n+1}(x) \|^p \leq  \|f_{n+1}\|^p \cdot \| g_n f_n(x) - x\|^p < (1 + \e_{n+1})^p\cdot \e_{n}^p=(\lambda^{p}\eta)^p \cdot(1+\lambda^{n+1}\eta)^p.$$
These inequalities give
\begin{align*}
\| f_n(x) - f_{n+1}(x) \|^p & < (\lambda^{n}\eta  + \lambda^{n}\eta  \lambda^{n+1}\eta)^p  + (\lambda^{n}\eta  \lambda^{n+1}\eta + \lambda^{n+1}\eta)^p \\ &< \eta^p (\lambda^{np}+2\lambda^{(n+1)p}+\lambda^{(n+1)p}) =\eta^p (\lambda^{np} + 3\lambda^{(n+1)p}).
\tag{$\star \star $}\label{twostar}
\end{align*}
Now it is clear that $(f_n(x))_{n\in \omega}$ is a  Cauchy sequence.
Given $x\in \bigcup_{n \in \omega} X_n$, define $h(x) = \lim_{n \geq
m}f_n(x)$, where $m$ is such that $x\in X_m$. Then $h$ is an
$\e_n$-isometry for every $n \in \omega$, hence it is an isometry.
Consequently, it  extends to an isometry on $h: U\to V$ that we do not relabel. Furthermore, (\ref{star})
and (\ref{twostar}) give
\begin{align*}
\| f(x) - h(x)\|^p  \leq \sum_{n=0}^\infty \eta^p (\lambda^{np} + 3\lambda^{(n+1)p})=\eta^p \frac{1+3\lambda^p}{1-\lambda^p} <\e^p
\end{align*}
It remains to see that $h$ is a bijection. To this end, we check
as before that $(g_n(y))_{n \geq  m}$ is a Cauchy sequence for
every $y\in Y_{m}$. Once this is done, we obtain an isometry
$g:V\to U$. Conditions (3) and (4) tell us that
$g\circ  h$ is the identity on $U$ and that $h \circ  g$ is the identity on $V$. This
completes the proof.
\end{proof}

\section{Nonseparable generalizations}\label{sec:w1}

As the reader may expect, we say that a quasi-Banach space $U$ is of universal disposition for a given class of quasi-Banach spaces $\mathscr C$ if, whenever $g:X\to Y$ is an isometry, where $Y$ belongs to $\mathscr C$ and $X$ is a subspace of $U$, then there is an isometry $f:Y\to U$ such that $f(g(x))=x$ for all $x\in X$.

Using $\mathbb G_p$ as an isometrically universal separable $p$-Banach space and iterating Lemma~\ref{po} until the first uncountable ordinal $\omega_1$ we now proceed as in \cite[Proposition 3.1(a)]{accgm} to prove the following.

\begin{theorem}\label{w1}
There is a $p$-Banach space of universal disposition for separable $p$-Banach spaces and whose density character is the continuum.
\end{theorem}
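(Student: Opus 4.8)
The plan is to build, by transfinite recursion along the ordinals $\alpha\le\omega_1$, an increasing chain $(E_\alpha)$ of $p$-Banach spaces, each sitting isometrically and with closed range inside the next, starting from $E_0=\mathbb G_p$ and taking $E=E_{\omega_1}$ as the desired space. At successor stages I would apply Lemma~\ref{po}, and at limit stages I would pass to the direct limit of Section~\ref{sec:direct}. The decisive feature exploited throughout is that Lemma~\ref{po} produces \emph{exact} isometric extensions (its ``moreover'' clause in the case $\e=0$), which is exactly what universal disposition, as opposed to almost universal disposition, demands.

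To set up the recursion I would first fix a set $\mathscr S$ of representatives, up to isometry, of all separable $p$-Banach spaces; since each such space is the completion of a countable-dimensional space over the rational scalars (in the sense of Remark~\ref{RmRationsl}) carrying a $p$-norm, $\mathscr S$ has cardinality at most $\mathfrak c=2^{\aleph_0}$. Likewise I fix a set $\mathfrak J$ of representatives of all isometric embeddings $u\colon A\to B$ with $A,B\in\mathscr S$, so $|\mathfrak J|\le\mathfrak c$. Given $E_\alpha$, I let $\mathfrak L_\alpha$ be the set of all nonexpansive operators from members of $\mathscr S$ into $E_\alpha$ and define $E_{\alpha+1}$ by applying Lemma~\ref{po} with $E=E_\alpha$, the fixed $\mathfrak J$, and $\mathfrak L=\mathfrak L_\alpha$; write $\imath_\alpha\colon E_\alpha\to E_{\alpha+1}$ for the resulting isometry. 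At a limit ordinal $\lambda\le\omega_1$ I put $E_\lambda=\underrightarrow{\lim}_{\alpha<\lambda}E_\alpha$. Because all connecting maps are isometries with complete domains, each $E_\alpha$ embeds as a closed subspace of $E$.

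For the density character I would argue by transfinite induction that $\dens(E_\alpha)\le\mathfrak c$ for all $\alpha\le\omega_1$, which is clear for $E_0=\mathbb G_p$. At a successor step the index set $\Gamma=\{(u,t)\colon \dom(u)=\dom(t)\}$ satisfies $|\Gamma|\le|\mathfrak J|\cdot|\mathfrak L_\alpha|\le\mathfrak c\cdot(\dens E_\alpha)^{\aleph_0}\le\mathfrak c^{\aleph_0}=\mathfrak c$ (a nonexpansive operator from a member of $\mathscr S$ is determined by its values on a countable dense set), every codomain $\cod(u)$ is separable, and $E_{\alpha+1}$ is a quotient of $\ell_p(\Gamma,\cod(u))\oplus_p E_\alpha$, whose density is at most $|\Gamma|\cdot\aleph_0+\dens E_\alpha\le\mathfrak c$; quotients do not raise the density character. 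At a limit $\lambda\le\omega_1$ one has $\dens E_\lambda\le\sum_{\alpha<\lambda}\dens E_\alpha\le\aleph_1\cdot\mathfrak c=\mathfrak c$, using $\aleph_1\le\mathfrak c$. Thus $\dens E\le\mathfrak c$, and the reverse inequality is the impossibility result of Ben Yaacov--Henson and Haydon discussed below, so $\dens E=\mathfrak c$.

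Finally I would verify universal disposition for separable spaces. Let $g\colon X\to Y$ be an isometry with $X$ a subspace of $E$ and $Y$ separable; then $X$ is separable, being isometric to a subspace of $Y$. Each element of a countable dense subset of $X$ is a limit of elements of $\bigcup_{\alpha<\omega_1}E_\alpha$, and since $\omega_1$ has uncountable cofinality all these approximants lie in a single $E_\alpha$ with $\alpha<\omega_1$; as $E_\alpha$ is closed in $E$, this forces $X\subseteq E_\alpha$. Now I pick $u\colon A\to B$ in $\mathfrak J$ together with surjective isometries $\phi\colon A\to X$ and $\psi\colon B\to Y$ with $\psi\circ u=g\circ\phi$; the composite $t\colon A\xrightarrow{\phi}X\hookrightarrow E_\alpha$ is then an isometric, hence nonexpansive, member of $\mathfrak L_\alpha$ with $\dom(t)=A=\dom(u)$, so $(u,t)\in\Gamma$. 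Lemma~\ref{po} supplies an isometry $t'\colon B\to E_{\alpha+1}$ with $t'\circ u=\imath_\alpha\circ t$, and setting $f=t'\circ\psi^{-1}\colon Y\to E_{\alpha+1}\subseteq E$ gives $f(g(x))=x$ for every $x\in X$. The hard part is precisely this capturing step: one must be certain that a copy of $g$ and the inclusion $X\hookrightarrow E_\alpha$ are simultaneously available in $\mathfrak J$ and $\mathfrak L_\alpha$ at one countable stage $\alpha$, which rests entirely on the uncountable cofinality of $\omega_1$ together with the closedness of each $E_\alpha$ in $E$.
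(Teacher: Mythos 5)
Your proof is correct and follows essentially the same route as the paper: iterate the push-out Lemma~\ref{po} (with $\e=0$) transfinitely along $\omega_1$, take direct limits at limit stages, and use the uncountable cofinality of $\omega_1$ to capture any separable subspace at a countable stage. The only (harmless) deviations are cosmetic: you start from $\mathbb G_p$ and recover the lower density bound via Proposition~\ref{PBenYakHay}, whereas the paper starts from an arbitrary space of density $2^{\aleph_0}$, and you index $\mathfrak J$ by abstract representatives of separable $p$-Banach spaces rather than by subspaces of $\mathbb G_p$.
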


\begin{proof} Let $\omega_1$ be the first uncountable ordinal. We may regard $\omega_1$ as the set of all countable ordinals equipped with the obvious order; see \cite{cie} for details. We are going to define a transfinite sequence of $p$-Banach spaces $(G_p^\alpha, f_\alpha^\beta)$ indexed by $\omega_1$ having the following properties:
\begin{itemize}
\item[(1)] For each $\alpha\in\omega_1$ the density character of $G_p^\alpha$ is at most the continuum.
\item[(2)] If $\beta=\alpha+1$ and $g:X\to Y$ is an isometry, where $Y$ is a separable $p$-Banach space and $X$ is a subspace of $G_p^\alpha$, then there is an isometry $f:Y\to G_p^{\beta}$ such that $f(g(x))=f_\alpha^{\beta}(x)$ for all $x\in X$.
\end{itemize}

We proceed by transfinite induction on $\alpha\in\omega_1$. Let us fix an arbitrary $p$-Banach space $C$ with density $2^{\aleph_0}$.  Then, we take $G_p^0=C$ to start.

 The inductive step is as follows. We fix $\gamma\in\omega_1$ and we assume that the directed system $(G_p^\alpha,f_\alpha^\beta)$ has been constructed for $\alpha,\beta<\gamma$ in such a way that (1)  and (2) hold for $\alpha,\beta<\gamma$.

 We want to see that we can continue the system in such a way that (1) and (2) now hold for $\alpha,\beta<\gamma+1$. We shall distinguish two cases.

 First, assume $\gamma$ is a limit ordinal. Then we take $G_p^\gamma=\underrightarrow{\lim}_{\alpha<\gamma} G_p^\alpha$ and $f_\alpha^\gamma=\imath_\alpha$. It is clear that $\dens(G_p^\gamma)\leq 2^{\aleph_0}$ and there is nothing else to prove since $\gamma$ cannot arise as $\alpha+1$ for $\alpha<\gamma$.

 Now, suppose $\gamma$ is a successor ordinal, say $\gamma=\delta+1$.
To construct $G_p^{\delta+1}$ we consider the set of all isometric embeddings between subspaces of $\mathbb G_p$ and we call it $\mathfrak J$ and the set $\mathfrak L$ of all $G_p^\delta$-valued isometries whose domain is a subpace of $\mathbb G_p$ -- recall that $G_p^\delta$ is already defined by the induction hypothesis.  Now, we let $E= G_p^\delta$ and we apply Lemma~\ref{po} with $\e=0$ to get the push-out space $G_p^{\delta+1}=E'$ and $f_\delta^{\delta+1}=\imath$. Observe that $G_p^{\delta+1}$ has density character at most the continuum since it is a quotient of the direct sum of $G_p^\delta$ and $\ell_p(\Gamma, \cod(u))$, where $\Gamma$ is a subset of $\mathfrak J\times \mathfrak L$, with   $|\mathfrak J|,|\mathfrak L|\leq\frak c$ and $\cod(u)$ separable for every $u$.

Now, for $\alpha<\delta$ we put $f_\alpha^{\delta+1}= f_\delta^{\delta+1}\circ f_\alpha^{\delta}$ and the Principle of Transfinite Induction goes at work.

The remainder of the proof is rather easy. We define $U$ as the direct limit of the system $(G_p^\alpha)_\alpha$ and we consider the natural isometries $\imath_\alpha: G_p^\alpha\to U$, so that
$$
U=\bigcup_{\alpha\in\omega_1} \imath_\alpha [G_p^\alpha].
$$
Observe that it is not necessary to take closures here.
Obviously, the density character of $U$ is at most the continuum. 

Suppose $g:X\to Y$ is an isometry, where $Y$ is a separable $p$-Banach space and $X$ a subspace of $
U$. Then there is $\alpha\in\omega_1$ so that $X\subset\imath_\alpha[G_p^\alpha]$. It is straightforward from (2) that there is an isometry $f:Y\to G_p^{\alpha+1}$ such that $\imath_{\alpha+1}(f(g(x)))=x$ for every $x\in X$.
\end{proof}

The following result, due to Ben Yaacov and Henson~\cite{BYH},
shows that it is impossible to reduce the size of the space in Theorem~\ref{w1}.
We include a nice, straightforward proof found by
Richard Haydon.

 Formally, Ben Yaacov and Henson stated and proved the result for $p=1$, however Haydon's argument gives exactly the same for any $p \in (0,1]$.
Namely, a $p$-Banach space of universal disposition for the class of $p$-Banach spaces of dimension three must already have density $2^{\aleph_0}$. This was asked in \cite[Problem 2]{accgm} for $p=1$.

\begin{proposition}[Ben Yaacov and Henson]\label{PBenYakHay}
Let $H$ be the 2-dimensional Hilbert space and suppose $X$ is a $p$-Banach space containing $H$ and having the following property:
\begin{enumerate}
\item[$(\checkmark)$] Given an isometric embedding $\map i H F$, where $F$ is a $3$-dimensional $p$-Banach space, there exists an isometric embedding $\map j F X$ such that $j \cmp i$ is the inclusion $H \subset X$.
\end{enumerate}
Then the density of $X$ is at least the continuum.
\end{proposition}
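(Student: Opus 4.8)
My plan is to deduce the density estimate from the existence, inside $X$, of a uniformly separated family of cardinality $\mathfrak c$. Indeed, if $\{x_\xi\}_{\xi\in I}\subset X$ satisfies $\|x_\xi-x_{\xi'}\|\geq c>0$ for all $\xi\neq\xi'$, fix $\rho$ with $2\rho^p<c^p$; then any dense subset of $X$ must contain a point within $\rho$ of each $x_\xi$, and no single point can be that close to two of them (two such would be at distance $<c$). Hence any dense set has at least $|I|$ elements, and it suffices to manufacture such a family with $|I|=\mathfrak c$. The whole problem thus reduces to a construction.

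The family will be produced by feeding $(\checkmark)$ a suitable collection of three-dimensional extensions of $H$. The point is that for any $3$-dimensional $p$-Banach space $F\supseteq H$ with a distinguished unit vector $w$, one may plant in $X$ a unit vector $x=j(w)$ whose \emph{entire} isometric relationship to $H$ is prescribed in advance, namely $\|h+\lambda x\|_X=\|h+\lambda w\|_F$ for every $h\in H$ and every scalar $\lambda$, because $j$ is an exact isometry fixing $H$. So I would fix an index set $I$ of size $\mathfrak c$, attach to each $\xi\in I$ a three-dimensional extension $(F_\xi,w_\xi)$ of $H$, apply $(\checkmark)$ to obtain isometries $j_\xi\colon F_\xi\to X$ with $j_\xi|_H=\mathrm{id}$, and set $x_\xi=j_\xi(w_\xi)$. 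The task is then to choose the $F_\xi$ so that the planted vectors are forced apart in $X$.

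The heart of the matter -- and the step I expect to be genuinely delicate -- is the lower bound $\|x_\xi-x_{\xi'}\|\geq c$. One cannot read this off from distances to $H$: the unit sphere of the \emph{two}-dimensional space $H$ is a circle, hence metrically precompact, so the profiles $h\mapsto\|x-h\|$ of unit vectors form a precompact family and cannot be uniformly separated in continuum. Concretely, any family $(F_\xi,w_\xi)$ depending continuously on a parameter running through $S^1$ collapses, since nearby directions produce extensions that amalgamate over $H$ with the two marked vectors arbitrarily close; the peaking- and kink-type gadgets one first tries all have amalgamation distance tending to $0$. The separation must therefore come not from approximate data over $H$ but from the \emph{exactness} built into $(\checkmark)$: exact isometric embeddings are far more rigid than the $\varepsilon$-isometric ones underlying almost universal disposition, which is precisely the gap that lets $\mathbb G_p$ be separable while forcing $X$ to be large. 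I would accordingly index the family over a purely combinatorial set of size $\mathfrak c$ -- branches of the dyadic tree, or an almost disjoint family on $\omega$ -- coding each index along a sequence of directions of $H$, and exploit the strict convexity (the inner-product rigidity) of the Hilbert plane to show that two distinct codes cannot be simultaneously realized over $H$ by nearby vectors in any common $p$-Banach space. Establishing this exact-amalgamation rigidity, and thereby converting ``distinct codes'' into a fixed metric gap $c$, is the crux on which the argument turns.
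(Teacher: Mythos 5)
Your opening reduction (a $c$-separated family of size $\mathfrak c$ forces density $\geq\mathfrak c$) and your plan to obtain the family as $x_\xi=j_\xi(w_\xi)$ for exact isometric embeddings $j_\xi:F_\xi\to X$ fixing $H$ both agree with the paper. But the proposal stops exactly where the content of the proposition begins: you never specify the extensions $(F_\xi,w_\xi)$ and never prove the separation $\|x_\xi-x_{\xi'}\|\geq c$, and you yourself flag that step as an unresolved ``crux.'' As written, this is not a proof; it is the easy counting step plus a programme.

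Moreover, the heuristic that steers your programme toward tree codings and ``exact-amalgamation rigidity'' is mistaken, and it points you away from the two-line argument that actually works. You claim that separation cannot be extracted from the norm profiles $h\mapsto\|x-h\|$ over $H$ because such profiles form a precompact family, and that any family of extensions parametrized continuously by the circle must collapse. The precompactness is only valid for profiles restricted to a \emph{bounded} subset of $H$; the paper's proof (Haydon's) tests against points $\mu\phi\in H$ with $\mu$ large, chosen pair by pair, where no compactness is available. Concretely, for each $\phi$ in the positive part $S$ of the unit sphere of $H$ one equips $H\oplus\K$ with the $p$-norm $\|(x,\lambda)\|_\phi^p=\max\bigl\{\|x\|_2^p,\ |\lambda|^p+|(x|\phi)|^p\bigr\}$, which extends the Euclidean norm of $H$, and uses $(\checkmark)$ to find $e_\phi\in X$ realizing it. For $\phi\neq\psi$ and $w=\mu\phi$ the $p$-triangle inequality gives
$$\|e_\phi-e_\psi\|^p\ \geq\ \|e_\phi+w\|^p-\|e_\psi+w\|^p\ =\ (1+\mu^p)-\max\bigl\{\mu^p,\ 1+\mu^p|(\phi|\psi)|^p\bigr\}\ =\ 1$$
as soon as $\mu^p\bigl(1-|(\phi|\psi)|^p\bigr)\geq 1$, which is achievable because $|(\phi|\psi)|<1$. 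So a family parametrized continuously by $S$ does \emph{not} collapse: the amalgamation distance of two such marked extensions over $H$ is bounded below by $1$ uniformly. No combinatorial coding by branches or almost disjoint families, and no rigidity lemma beyond this computation, is needed.
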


\begin{proof} (Haydon)
Let $S$ be the positive part of the unit sphere of $H$.
Given $\phi \in S$, we define a $p$-norm on $H \oplus \K$ (recall that $\K$ is the scalar field) by the formula
$$\norm{(x,\lambda)}^p_\phi = \max \Bigl\{ \norm x_2^p, |\lambda|^p + |(x | \phi)|^p \Bigr\},$$
where $(\cdot|\cdot)$ denotes the usual scalar product on $H$.
Note that $\norm{(0,1)}_\phi = 1$ and $\norm{\cdot}_\phi$ extends the Euclidean norm $\norm{\cdot}_2$ of $H$, where $x \in H$ is identified with $(x,0)$.
Using ($\checkmark$), for each $\phi \in S$ we can find $e_\phi \in X$ such that the map $\map {i_\phi}{H \oplus \K} X$, defined by $i_\phi(x,\lambda) = x + \lambda e_\phi$, is an isometric embedding with respect to $\norm{\cdot}_\phi$.

Fix $\phi, \psi \in S$ such that $\phi \ne \psi$ and let $\norm{\cdot}$ denote the $p$-norm of $X$.
Fix $\mu > 0$ and let $w = \mu \phi \in H \subset X$.
Then
$$\norm{e_\phi - e_\psi}^p \geq \norm{e_\phi + w}^p - \norm{e_\psi + w}^p = \norm{(\mu \phi,1)}^p_\phi - \norm{(\mu \phi,1)}^p_\psi.$$
Finally, observe that $\norm{(\mu \phi,1)}^p_\phi = 1 + \mu^p$ and
$$\norm{(\mu \phi, 1)}^p_\psi = \max \Bigl\{ \mu^p, 1 + \mu^p |(\phi | \psi)|^p \Bigr\} = \mu^p,$$
whenever $\mu$ is sufficiently large, because $|(\phi | \psi)| < 1$ (recall that $\phi$, $\psi$ are distinct vectors of $S$).
Thus, we conclude that $\norm{e_\phi - e_\psi} \geq 1$ whenever $\phi \ne \psi$, which shows that the density of $X$ is at least $|S|=2^{\aleph_0}$.
\end{proof}

A couple of additional remarks about Theorem~\ref{w1} are in order. First, it is clear that any $p$-Banach space of universal disposition for the class of all separable $p$-Banach spaces must contain an isometric copy of every $p$-Banach space of density  $\aleph_1$. This is so because every quasi-Banach space $X$ of density $\aleph_1$ can be written as $X=\bigcup_{\alpha\in\omega_1}X_\alpha$, where each $X_\alpha$ is a separable subspace of $X$ and $X_\alpha\subset X_\beta$ whenever  $\alpha\leq\beta$ are countable. For the same reason, if we assume the continuum hypothesis, then we can easily obtain uniqueness up to isometries in Theorem~\ref{w1}. See Section~\ref{universal} for more on this.

\section{Some forms of injectivity for $p$-Banach spaces}\label{sec:inj}

In this Section we study the extension of operators with values in $\mathbb G_p$ and its nonseparable relatives.

\begin{definition}\label{DefVrsbtix}
Let $E$ be a $p$-Banach space.
\begin{itemize}
\item[(a)] We say that $E$ is injective amongst $p$-Banach spaces if for every $p$-Banach space $X$  and every subspace $Y$ of $X$, every operator $t:Y\to E$ can be extended to an operator $T:X\to E$. If this can be achieved with $\|T\|\leq\lambda\|t\|$ for some fixed $\lambda\geq 1$, then $E$ is said to be $\lambda$-injective amongst $p$-Banach spaces.
\item[(b)] $E$ is said to separably injective   or separably $\lambda$-injective amongst $p$-Banach spaces if the preceding condition holds when $X$ is separable.
\item[(c)] $E$ is said to be locally injective amongst $p$-Banach spaces if there is a constant $\lambda$ such that every finite-dimensional $p$-Banach space $X$  and every subspace $Y$ of $X$, every operator $t:Y\to E$ can be extended to an operator $T:X\to E$ with $\|T\|\leq\lambda\|t\|$.
\item[(d)] Finally, $E$ is called locally $1^+$-injective amongst $p$-Banach spaces if it satisfies the preceding condition for every $\lambda > 1$.
\end{itemize}
\end{definition}

These notions play a fundamental role in Banach space theory.
As it is well-known, a Banach space is injective (amongst Banach spaces) if and only if it is a complemented subspace of $\ell_\infty(I)$ for some set $I$.
Also, a Banach space is locally injective if and only if it is a $\mathscr L_\infty$-space and it is locally $1^+$-injective if and only if it is a Lindenstrauss space.

As for separable injectivity, Sobczyk theorem asserts that $c_0$ is separably 2-injective and a deep result by Zippin states that every separable separably injective Banach space has to be isomorphic to $c_0$. Nevertheless, there is a wide variety of (nonseparable) separably injective Banach spaces, see \cite{z, adv}.

\begin{proposition}\label{inj} Let $0<p<1$.
\begin{itemize}
\item[(a)]  No nonzero $p$-Banach space is injective amongst $p$-Banach spaces.
\item[(b)]  Every space of almost universal disposition for finite-dimensional $p$-Banach spaces, in particular $\mathbb G_p$, is locally $1^+$-injective amongst $p$-Banach spaces.
\item[(c)]  All spaces of universal disposition for separable $p$-Banach spaces, in particular those appearing in Theorem~\ref{w1}, are separably $1$-injective amongst $p$-Banach spaces.
\end{itemize}
\end{proposition}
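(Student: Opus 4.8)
\emph{Parts (b) and (c).} These two have essentially the same proof, the only difference being that in (b) the ambient spaces are finite-dimensional and almost universal disposition yields $\e$-isometries, whereas in (c) they are separable and universal disposition yields genuine isometries. So let $U$ be of almost universal disposition for finite-dimensional (resp.\ of universal disposition for separable) $p$-Banach spaces, let $Y$ be a subspace of a finite-dimensional (resp.\ separable) space $X$, and let $t\colon Y\to U$ be an operator; dividing by $\|t\|$ we may assume $\|t\|\le 1$. The plan is to realise the extension as a retraction of a push-out. I would push out the inclusion $\iota\colon Y\to X$ along $t\colon Y\to U$ to obtain
\[
\begin{CD}
Y @>\iota>> X\\
@V t VV @VV t' V\\
U @> \iota' >> \PO
\end{CD}
\]
Since $\iota$ is an isometry and $\|t\|\le 1$, the push-out Lemma gives that $\iota'\colon U\to\PO$ is an isometry, while $t'\colon X\to\PO$ is nonexpansive and $\iota'\cmp t=t'\cmp\iota$.

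Next I would fix a finite-dimensional (resp.\ separable) subspace $A\subseteq U$ with $t[Y]\subseteq A$, and set $B=\overline{\iota'[A]+t'[X]}\subseteq\PO$; this is again finite-dimensional (resp.\ separable), and $\iota'|_A\colon A\to B$ is an isometric embedding of the subspace $A$ of $U$ into $B$. Applying the defining extension property of $U$ to this embedding, for every $\e>0$ I obtain an $\e$-isometry (resp.\ an isometry, taking $\e=0$) $f\colon B\to U$ with $f(\iota'(a))=a$ for all $a\in A$. Put $T=f\cmp t'\colon X\to U$. For $y\in Y$ one has $t'(y)=\iota'(t(y))$ with $t(y)\in A$, whence $T(y)=f(\iota'(t(y)))=t(y)$, so $T$ extends $t$; moreover $\|T\|\le\|f\|\,\|t'\|\le 1+\e$ (resp.\ $\le 1$). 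In case (b), given $\lambda>1$ I take $\e=\lambda-1$ and, undoing the normalisation, recover $\|T\|\le\lambda\|t\|$, which is local $1^+$-injectivity. In case (c) the isometry $f$ gives $\|T\|\le\|t\|$, that is, separable $1$-injectivity.

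\emph{Part (a).} Extending the identity of $E$ shows that a $\lambda$-injective space is $\lambda$-complemented in every $p$-Banach space containing it; so it suffices to embed an arbitrary nonzero $E$ into some superspace admitting no bounded projection onto it. I would embed $E$ as the constants in the $E$-valued space $L_p([0,1];E)$, which is isometric. If there were a bounded projection $P$ onto $E$ and a functional $\phi\in E^*$ with $\phi(e)=1$ for some $e$, then $f\mapsto\phi\bigl(P(f\cdot e)\bigr)$ would be a nonzero bounded linear functional on $L_p[0,1]$, contradicting the triviality of its dual for $0<p<1$. This settles every $E$ possessing a separating functional.

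The genuine difficulty is the remaining case, where $E$ has trivial dual (for instance $E=L_p$ itself): no scalar functional can be extracted, and the naive averaging/convexity arguments are defeated precisely because the constant $\lambda$ absorbs them. Here I would instead invoke Kalton's theory of twisted sums: for $0<p<1$ one produces a non-split short exact sequence $0\to E\to X\to Q\to 0$, typically built from a nontrivial quasi-linear map on $\ell_p$, in which $E$ is necessarily uncomplemented, again contradicting injectivity. Securing such a nontrivial extension for \emph{every} nonzero $E$ — in particular for spaces with trivial dual — is the main obstacle of the whole proposition, and is exactly the point where the non-local-convexity phenomena underlying \cite{k78} enter.
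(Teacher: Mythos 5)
Your parts (b) and (c) are correct and follow essentially the paper's route: form a push-out of the inclusion $Y\to X$ against $t$, observe that the arrow coming out of $U$ (or of $t[Y]$) is an isometry because the inclusion is an isometry and $t$ is nonexpansive, and then use (almost) universal disposition to retract back into $U$. The only cosmetic difference is that the paper pushes out along the corestriction $t\colon Y\to t[Y]$, so that the push-out is already finite-dimensional (resp.\ separable) and can be fed directly to the definition, whereas you push into all of $U$ and then cut down to $B=\overline{\iota'[A]+t'[X]}$; both work, and yours costs one extra line.

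Part (a), however, has a genuine gap, and you have located it yourself: the whole difficulty of the statement is precisely the case $E^*=0$ (e.g.\ $E=L_p$, or indeed $E=\Gp$, whose dual is trivial), and for that case you offer only a gesture towards ``Kalton's theory of twisted sums'' without producing, for an arbitrary nonzero $E$, a superspace in which $E$ is uncomplemented. Worse, the specific suggestion is unlikely to work as stated: for $0<p<1$ the space $\ell_p$ is a $K$-space by a theorem of Kalton, so nontrivial quasi-linear maps on $\ell_p$ with scalar values do not even exist, and in general one has no reason to expect $\operatorname{Ext}(Q,E)\neq 0$ for every nonzero $p$-Banach $E$ and some fixed $Q$. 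The ingredient the paper uses instead is the Kalton--Popov theorem: if $\mu$ is Haar measure on the product of $2^{\aleph}$ copies of $\mathbb T$, where $\aleph$ is the density character of $E$, then \emph{every} operator from $L_p(\mu)$ into $E$ is zero (Kalton for $\aleph=\aleph_0$, Popov in general). Granting this, one does not even need to discuss complementation: fix $0\neq x\in E$, let $\K$ be the line of constant functions in $L_p(\mu)$, and consider the rank-one operator $\lambda\mapsto\lambda x$; any bounded extension to $L_p(\mu)$ would be a nonzero operator into $E$, a contradiction. So the missing idea is not a twisted sum but a sufficiently large $L_p(\mu)$ admitting no nonzero operators into $E$; your functional-based argument is a special case of this (it is exactly the Kalton--Popov phenomenon for $E=\K$) but does not extend to trivial-dual targets.
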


\begin{proof} (a) Let $E$ be a $p$-Banach space with density character $\aleph$. Let $\mu$ denote Haar measure on the product of a family of $2^\aleph$ copies of $\mathbb T$, the unit circle. Then there is no nonzero operator from $L_p(\mu)$ to $E$ (recall that $p<1$). This was proved for $\aleph=\aleph_0$ by Kalton (see \cite[p. 163, at the end of Section 3]{k78}) and by Popov in general \cite[Theorem 1]{p}.

Thus, if we fix a nonzero $x\in E$ and we consider the subspace $\mathbb K$ of constant functions in $L_p(\mu)$, then the operator $\lambda\in\mathbb K\mapsto \lambda x\in E$ cannot be extended and $E$ is not injective.

(b) Assume $U$ is of almost universal disposition for finite-dimensional $p$-Banach spaces. Let $X$ be a finite-dimensional $p$-Banach space, $Y$ a subspace of $X$ and $t:Y\to U$ an operator of norm one. We will prove that, for each $\e>0$, there is an extension $T:X\to U$ with $\|T\|\leq1+\e$. Consider the push-out diagram
$$
\begin{CD}
Y@>>> X\\
@VtVV @VVt'V\\
t[Y]@>\imath >>  \PO
\end{CD}
$$
where the unlabelled arrow is plain inclusion. As $\imath$ is an isometry, for each $\e>0$, there is an $\e$-isometry $u:\PO\to U$ such that $\imath(t(y))=u(t(y))$ for all $y\in Y$. Then $u\circ t'$ is an extension of $t$ to $X$ with quasinorm at most $1+\e$.

(c) Replace ``finite-dimensional'' by ``separable'', take the closure of $t(Y)$, delete the word ``almost'', and put $\e=0$  in the proof of (b).
\end{proof}

\section{Universal operators on $p$-Gurari\u\i\ spaces}\label{sec:operas}

Finding operators on quasi-Banach spaces can be a difficult task. Actually, it can be an impossible task: Kalton and Roberts exhibited in \cite{rigid} a certain closed subspace of $L_p$ (for $0<p<1$) which is ``rigid'' -- every endomorphism is a multiple of the identity. Of course $\Gp$ cannot be so extreme since Theorem~\ref{completes} says that it has plenty of isometries.

Throughout this section we again fix $p \in (0,1]$.
Our aim is to construct a nonexpansive projection $\Pp$ on $\Gp$ whose kernel is isometric to $\Gp$ and satisfying the following condition:
\begin{enumerate}
	\item[$(\heartsuit)$] Given a nonexpansive operator $\map s X \Gp$, where $X$ is a separable $p$-Banach space, there exists an isometry $\map e X \Gp$ such that $\Pp \cmp e = s$.
\end{enumerate}
This will show, in particular, that $\Gp$ has nontrivial projections.

For the remaining part of the section we fix a locally $1^+$-injective separable $p$-Banach space $\Ha$.
Note that, by Proposition~\ref{inj}, we may take $\Ha = \Gp$.
In fact, besides obvious variants like the $c_0$-sum of $\Gp$ (see Corolary 6.6 below), we do not know essentially different examples, unless $p = 1$, where being locally $1^+$-injective is the same as being a Lindenstrauss space and a projection satisfying $(\heartsuit)$ has already been described in \cite{KuMetrics}.

In order to present the announced construction, we shall define a special category involving $\Ha$, which is actually a particular case of so-called comma categories. These ideas come from a recent work of Pech \& Pech~\cite{Pech} as well as from Kubi\'s~\cite{KuMetrics}, where an abstract theory of almost homogeneous structures has been developed.
Namely, let $\fK$ be the category whose objects are nonexpansive operators $u: U\to \Ha$ where $U$ is a finite-dimensional $p$-Banach space.
An $\fK$-morphism from $\map u U \Ha$ to $\map v V \Ha$ is an isometry $\map i U V$ satisfying $v \cmp i = u$.
In this case we write $\map i u v$.
Using the properties of push-outs, we easily obtain the following fact.

\begin{lemma}
$\fK$ has amalgamations.
Namely, given two $\fK$-morphisms $\map i z x$, $\map j z y$, there exist $\fK$-morphisms $i'$, $j'$ such that $i' \cmp i = j' \cmp j$.
\end{lemma}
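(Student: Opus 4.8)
The plan is to verify that the category $\fK$ has amalgamations by constructing the required morphisms $i'$ and $j'$ directly from a push-out of the underlying isometries, using the universal property recorded in Section~\ref{sec:po} to produce the comparison map into $\Ha$. Suppose we are given $\fK$-morphisms $\map i z x$ and $\map j z y$, where the objects are nonexpansive operators $\map u U \Ha$, $\map v V \Ha$, $\map w W \Ha$ out of finite-dimensional spaces; unwinding the definition, $i\colon U\to V$ and $j\colon U\to W$ are isometries satisfying $v\cmp i = u$ and $w\cmp j = u$. First I would form the push-out of the pair $(i,j)$, obtaining a space $\PO$ together with the canonical maps $\map{i'}{W}{\PO}$ and $\map{j'}{V}{\PO}$ fitting into the square of Diagram~\ref{PO} (with $i,j$ in the roles of $u,v$), so that $j'\cmp i = i'\cmp j$ as maps out of $U$.

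The next step is to equip $\PO$ with a $\fK$-object structure, that is, a nonexpansive map $\map q {\PO} \Ha$. Since $v\cmp i = u = w\cmp j$, the two operators $v\colon V\to\Ha$ and $w\colon W\to\Ha$ agree after precomposition with the two legs of the push-out; by the universal property of the push-out there is a unique operator $\map q {\PO} \Ha$ with $q\cmp j' = v$ and $q\cmp i' = w$, and moreover $\|q\|\leq\max(\|v\|,\|w\|)\leq 1$, so $q$ is nonexpansive. Thus $q$ is a legitimate $\fK$-object. The identities $q\cmp j' = v$ and $q\cmp i' = w$ say precisely that $j'$ is an $\fK$-morphism from $v$ to $q$ and $i'$ is an $\fK$-morphism from $w$ to $q$, provided $i'$ and $j'$ are isometries.

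The one genuine point requiring care, and the step I expect to be the main obstacle, is establishing that $i'$ and $j'$ are isometries rather than merely nonexpansive; this is what Lemma~\ref{relax}'s neighbor, the isometry lemma following Diagram~\ref{PO}, is designed to supply. That lemma states that in the push-out square, if the top map $i$ is an isometry and the left map $j$ satisfies $\|j\|\leq 1$, then $j'$ is an isometry. Here $i$ is an isometry and $j$ is an isometry (hence nonexpansive), so the lemma gives that $j'\colon V\to\PO$ is an isometry; by the symmetry of the push-out construction in its two arguments the same lemma applied with the roles of $i$ and $j$ interchanged yields that $i'\colon W\to\PO$ is an isometry as well. (One must note that $\PO$ is finite-dimensional, being a quotient of $V\oplus_p W$, so $q$ is a bona fide $\fK$-object.)

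Finally I would assemble the pieces: the square commutativity $j'\cmp i = i'\cmp j$ is exactly the amalgamation identity demanded in the statement, the morphism conditions $q\cmp j'=v$ and $q\cmp i'=w$ certify that $j'$ and $i'$ are $\fK$-morphisms into the common object $q$, and their being isometries has just been verified. Writing the desired morphisms as $j'\map{}vq$ and $i'\map{}wq$ (a harmless relabelling of the statement's $i',j'$) completes the construction, so $\fK$ has amalgamations.
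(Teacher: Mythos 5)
Your proof is correct and is exactly the argument the paper intends (the paper only says ``using the properties of push-outs'' and displays the amalgamation diagram with $W$ taken to be the push-out): you form the push-out of $i$ and $j$, obtain the structure map $q\colon\PO\to\Ha$ from the push-out's universal property, which also gives $\|q\|\leq\max(\|v\|,\|w\|)\leq 1$, and invoke the isometry lemma following Diagram~\ref{PO} to see that the canonical maps are isometric. The only blemish is a harmless label swap: with $i$ on top and $j$ on the left, that lemma certifies the \emph{bottom} map $i'\colon W\to\PO$ rather than $j'$, but since both $i$ and $j$ are isometries the two orientations of the square together yield that both $i'$ and $j'$ are isometries, which is what you conclude.
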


The amalgamation property is visualized in the following commutative diagram, in which $W$ could be the push-out associated to the operators $i$ and $j$.
\begin{equation}\label{amalgamation}\xymatrix{
Y \ar[rr]^{j'} \ar[drrrrrr]_y & & W \ar[drrrr]^w \\
& & & & & & \Ha \\
Z \ar[rr]_i \ar[uu]^j \ar[urrrrrr]^z & & X \ar[uu]_{i'} \ar[urrrr]_x
}
\end{equation}

We also need the following strengthening of Lemma~\ref{key}.

\begin{lemma}\label{keypoperator}
Let $\map f X Y$ be an $\e$-isometry between finite-dimensional $p$-Banach spaces and let $\map r X \Ha$, $\map s Y \Ha$ be nonexpansive linear operators such that $s \cmp f$ is $\e$-close to $s$.
Let $X\oplus_f^\e Y$ be the space constructed in the proof of Lemma~\ref{key} and let $i,j$ be the canonical isometric embeddings of $X$ and $Y$, respectively.
Then the operator $\map {r\oplus s}  {X \oplus_f^\e Y} \Ha$, defined by $(r\oplus s)(x,y) = r(x) + s(y)$, is nonexpansive and has the property that $(r\oplus s) \cmp i = r$, $(r\oplus s) \cmp j = s$.
In particular, $i$ and $j$ become $\fK$-morphisms.
\end{lemma}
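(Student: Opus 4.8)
The plan is to read everything off the explicit formula for the norm of $X\oplus_f^\e Y$ recorded in the proof of Lemma~\ref{key}, namely
\[
\norm{(x,y)}^p=\inf\Bigl\{\norm{x_0}_X^p+\norm{y_1}_Y^p+\e^p\norm{x_2}_X^p:\ (x,y)=(x_0,0)+(0,y_1)+(x_2,-f(x_2))\Bigr\}.
\]
The only real content is the observation that the weight $\e^p$ attached to the diagonal coordinate $(x_2,-f(x_2))$ is exactly calibrated to absorb the discrepancy measured by the compatibility hypothesis $\norm{r-s\cmp f}\le\e$ (so that $\norm{r(x)-s(f(x))}_\Ha\le\e\norm{x}_X$ for all $x\in X$). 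Once this is seen, nonexpansiveness of $r\oplus s$ falls out of a single algebraic regrouping.

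First I would prove that $r\oplus s$ is nonexpansive. Fix $(x,y)$ and an arbitrary admissible decomposition $(x,y)=(x_0,0)+(0,y_1)+(x_2,-f(x_2))$, so that $x=x_0+x_2$ and $y=y_1-f(x_2)$. By linearity of $r$, $s$ one regroups
\[
(r\oplus s)(x,y)=r(x_0)+s(y_1)+\bigl(r(x_2)-s(f(x_2))\bigr).
\]
Applying the $p$-triangle inequality in $\Ha$, then the nonexpansiveness of $r$ and $s$ on the first two summands and the bound $\norm{r(x_2)-s(f(x_2))}_\Ha\le\e\norm{x_2}_X$ on the third, gives $\norm{(r\oplus s)(x,y)}_\Ha^p\le\norm{x_0}_X^p+\norm{y_1}_Y^p+\e^p\norm{x_2}_X^p$. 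Taking the infimum over all such decompositions yields $\norm{(r\oplus s)(x,y)}_\Ha^p\le\norm{(x,y)}^p$, which is exactly the nonexpansiveness claim.

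The two identities $(r\oplus s)\cmp i=r$ and $(r\oplus s)\cmp j=s$ are then immediate from $i(x)=(x,0)$, $j(y)=(0,y)$ and the defining formula $(r\oplus s)(x,y)=r(x)+s(y)$. Finally, Lemma~\ref{key} already guarantees that $i$ and $j$ are isometric embeddings of $X$ and $Y$ into $X\oplus_f^\e Y$; combined with the two composition identities and the nonexpansiveness just established, this says precisely that $i$ is an $\fK$-morphism from the object $r$ to the object $r\oplus s$, and $j$ is an $\fK$-morphism from $s$ to $r\oplus s$.

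I do not expect any genuine obstacle: the argument is entirely routine once the regrouping is in place, and there is no extremal or limiting issue, since every element of $X\oplus_f^\e Y$ admits decompositions of the required form and the infimum is taken over all of them. The single conceptual point to flag is the matching between the hypothesis $\norm{r-s\cmp f}\le\e$ and the coefficient $\e^p$ in the norm of $X\oplus_f^\e Y$ — this is what makes the diagonal term harmless and is the reason the lemma is a strengthening of Lemma~\ref{key} rather than an independent statement.
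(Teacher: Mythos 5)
Your proof is correct and is essentially identical to the paper's: both regroup $(r\oplus s)(x,y)=r(x_0)+s(y_1)+\bigl(r(x_2)-s(f(x_2))\bigr)$ for an arbitrary admissible decomposition, bound the three terms by $\|x_0\|_X^p+\|y_1\|_Y^p+\e^p\|x_2\|_X^p$ using the hypothesis $\|r-s\cmp f\|\le\e$ (the statement's ``$s\cmp f$ is $\e$-close to $s$'' is evidently a typo for $r$, as you read it), and pass to the infimum. Nothing is missing.
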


\begin{proof}
Fix $(x,y) \in X \oplus Y$ and assume $x = x_0 + x_2$, $y = y_1 - f(x_2)$.
Using the fact that $\|r(x_2) - s(f (x_2))\| \leq \e \|x_2\|$, we get
$$\|(r\oplus s)(x,y)\|^p = \| r(x_0)+r(x_2) + s(y_1) - s(f(x_2)) \|^p \leq \|x_0\|_X^p + \|y_1\|_Y^p + \e^p \|x_2\|_X^p.$$
And recalling that $\|(x,y)\|^p$ is the infimum of all expressions that can arise as the right-hand side of the preceding inequality, we see that $\|(r\oplus s)(x,y)\|^p \leq \|(x,y)\|^p$.
\end{proof}

Now we need to work within a countable subcategory of $\fK$ having certain ``density'' properties. To this end, let $D$ be a dense countable subset of $\Ha$ and let $\Ha_0$ denote the (dense) subspace of all finite linear combinations of elements of $D$ with rational coefficients. We define a subcategory $\fK_0$ of $\fK$ as follows:
\begin{itemize}
\item The objects of $\fK_0$ are those nonexpansive operators $f:F\to\Ha$ in which $F$ is a rational $p$-normed space (see the definition in Remark~\ref{RmRationsl}) and $f$ sends the rational vectors of $F$ into $\Ha_0$.
\item Given objects $f:F\to\Ha$ and $g:G\to\Ha$ in $\fK_0$, a $\fK_0$-morphism is a $\fK$-morphism $i:F\to G$ preserving ``rational'' vectors.
\end{itemize}

Let us collect the properties of $\fK_0$ we shall invoke later.

\begin{lemma}\label{invoke}
{\rm (a)} Given  $g:G\to \Ha$ in $\fK$ and $\e>0$ there is $f:F\to\Ha$ in $\fK_0$ and a surjective  $\e$-isometry $u:G\to F$ such that $\|g -f\circ u\|<\e$.

{\rm (b)} $\fK_0$ has amalgamations.
\end{lemma}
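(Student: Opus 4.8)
The plan is to treat the two parts separately, deriving (a) from the density of rational $p$-norms among finite-dimensional $p$-norms together with the density of $\Ha_0$ in $\Ha$, and (b) from the fact that the push-out of rational $p$-normed spaces is again rational.

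For (a), let $G$ be $n$-dimensional and fix a linear identification $G=\K^n$, so that the rational vectors of $G$ are well defined. The first move is to perturb $g$ on a basis: using that $\Ha_0$ is dense in $\Ha$, I choose for each basis vector $e_i$ an element $w_i\in\Ha_0$ with $\|g(e_i)-w_i\|$ small and let $g_1$ be the linear map with $g_1(e_i)=w_i$. By the basis-control principle exploited in Lemma~\ref{relax}, $\|g-g_1\|<\delta$ for any prescribed $\delta$, and $g_1$ carries every rational vector into $\Ha_0$ because $\Ha_0$ is closed under rational linear combinations. Next I manufacture the rational domain. Consider the $p$-norm $m(x)=\max\{(1-\e)\|x\|_G,\ \|g_1(x)\|\}$ (a maximum of two $p$-norms is again a $p$-norm), and note that, for $\delta<\e$, the ball $B_m=\{m\le 1\}$ contains $(1+\e)^{-1}B_G$ in its interior while $m\le(1+\e)\|\cdot\|_G$. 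A covering argument based on Remark~\ref{RmRationsl} then yields finitely many rational vectors lying in $B_m$ whose absolutely $p$-convex hull contains $(1+\e)^{-1}B_G$; the rational $p$-norm $|\cdot|_F$ they generate satisfies $m\le|\cdot|_F\le(1+\e)\|\cdot\|_G$. Taking $F=(\K^n,|\cdot|_F)$, $u=\mathrm{id}$ and $f=g_1$, the sandwich shows that $u$ is a surjective $\e$-isometry, the inequality $\|g_1(x)\|\le m(x)\le|x|_F$ shows that $f$ is nonexpansive, $f$ is $\Ha_0$-valued on rational vectors, and $\|g-f\circ u\|=\|g-g_1\|<\e$.

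For (b), given $\fK_0$-morphisms $i\colon z\to x$ and $j\colon z\to y$, I simply take the push-out $W_0$ of the isometries $i\colon Z\to X$ and $j\colon Z\to Y$ from Section~\ref{sec:po}, with its canonical isometric embeddings $i'\colon X\to W_0$, $j'\colon Y\to W_0$ satisfying $i'\circ i=j'\circ j$. The point is that $W_0$ is again a rational $p$-normed space: its unit ball is the image under the quotient map of the unit ball of $X\oplus_p Y$, hence the absolutely $p$-convex hull of $i'(B_X)\cup j'(B_Y)$; since $B_X$ and $B_Y$ are the absolutely $p$-convex hulls of their rational generators, $B_{W_0}$ is the absolutely $p$-convex hull of the union of these (rational) generators, which is exactly a $p$-norm of the form in Remark~\ref{RmRationsl}, and $i',j'$ preserve rational vectors. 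Because $x\circ i=z=y\circ j$, the universal property of the push-out produces a nonexpansive $w_0\colon W_0\to\Ha$ with $w_0\circ i'=x$ and $w_0\circ j'=y$; as $x$ and $y$ are $\Ha_0$-valued on rational vectors and every rational vector of $W_0$ is the image of a rational vector of $X\oplus_p Y$, the map $w_0$ is $\Ha_0$-valued on rational vectors as well. Thus $w_0$ is a $\fK_0$-object and $i',j'$ are $\fK_0$-morphisms amalgamating $i$ and $j$ exactly.

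The main obstacle is the simultaneous bookkeeping in (a): one must produce a \emph{single} rational $p$-norm that at once makes $f$ nonexpansive, keeps $u$ a genuine two-sided $\e$-isometry, and is of the special form of Remark~\ref{RmRationsl}; the envelope norm $m$ and the covering argument are designed precisely to reconcile these competing constraints. In (b) the only real subtlety---and its crux---is the observation that push-outs do not leave the class of rational $p$-normed spaces, which rests on identifying the push-out ball as an absolutely $p$-convex hull of rational generators; once this is seen, the universal property delivers both the amalgamating isometries and the compatible $\Ha_0$-valued map with no further approximation.
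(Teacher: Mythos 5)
Your proof is correct and takes essentially the same route as the paper's: in (a) you perturb $g$ on a basis into $\Ha_0$ and replace the domain norm by a nearby rational $p$-norm (the paper arranges the perturbed operator $h$ to be nonexpansive for $\|\cdot\|_G$ and picks a rational norm dominating $\|\cdot\|_G$, while you fold $\|g_1(\cdot)\|$ into the envelope $m$ --- a harmless variant), and in (b) both arguments rest on the observation that the push-out of rational $p$-normed spaces along rationality-preserving isometries is again rational, you via the unit ball as a $p$-convex hull of rational generators, the paper via rational quotient maps. No gaps worth noting.
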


\begin{proof}
(a) By fixing a basis, we may assume $G$ is just $\mathbb K^n$ with some (not necessarily rational) $p$-norm. Take $h:G\to \Ha$ sending the unit basis to $D$ and such that $\|h\|\leq 1$ and $\|h-g\|<\e$. Let $|\cdot|$ be a rational $p$-norm on $\mathbb K^n$ such that $(1-\e)|\cdot|\leq \|\cdot\|_G\leq|\cdot|$.  Then set $F=(\mathbb K^n,|\cdot|)$, $u:G\to F$ the formal identity and $f=h\circ u^{-1}$ -- that is, $f$ is just $h$ viewed as a mapping from $F$ to $\Ha$.

(b) It is obvious that the $p$-sum of two rational $p$-normed spaces is again ``rational'' and that if $u:X\to Y$ is a rational map acting between rational $p$-normed spaces, then  there is a rational $p$-normed space $Z$, a rational surjection $\varpi: X\to Z$ and a linear isometry $v:Z\to Y/u[X]$ such that $\pi=v\circ\varpi$, where $\pi:Y\to Y/u[X]$ is the natural quotient map. This implies that everything in the amalgamation Diagram~(\ref{amalgamation}) can be done in $\fK_0$ if the initial data $i$ and $j$ are morphisms in $\fK_0$.
\end{proof}

We now construct a sequence
$$
\begin{CD}
u_0@>\imath_1>> u_1@>\imath_2 >> u_2@>>> \cdots
\end{CD}
$$
where each $u_n \colon U_n\to\Ha$ is an object of $\fK$, each arrow in the diagram above is a morphism in $\fK$, so that $\imath_{n+1}:U_n\to U_{n+1}$ is an isometry such that $u_n=u_{n+1}\circ \imath_{n+1}$, and the following condition is satisfied:
\begin{enumerate}
	\item[($\dagger$)] Given $n \in \Nat$, $\e>0$, an isometric embedding $\map e{U_n}V$ with $V$ finite-dimensional, and a nonexpansive operator $\map v V \Ha$ satisfying $v\cmp e = u_n$, there exist $m>n$ and an $\e$-isometric embedding $\map {e'} V {U_m}$ such that $e' \cmp e$ is $\e$-close to the linking map $\imath_{(n,m)}:U_n\to U_m$ and $u_m \cmp e'$ is $\e$-close to $v$.
\end{enumerate}
This can be proved by following the lines of \cite[Section 4.1]{KuMetrics}. Here we adopt a different approach which is similar to the construction from the proof of Lemma~\ref{contains}, but at each stage taking into account a fixed nonexpansive linear operator into $\Ha$.
Taking $\Ha = 0$, this will also provide an alternative construction of $\Gp$.

\begin{lemma}
There exists a sequence $u_0 \to u_1 \to u_2 \to \cdots$ satisfying $(\dagger)$.
\end{lemma}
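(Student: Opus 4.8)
The plan is to build the sequence by recursion, servicing one extension request at a time through a standard bookkeeping argument, exactly in the spirit of the proof of Lemma~\ref{contains} but carrying along, at each stage, the prescribed nonexpansive operator into $\Ha$. Since each $U_n$ must be finite-dimensional, we cannot absorb all requests at once (that would force $U_{n+1}$ to be infinite-dimensional, as in Lemma~\ref{contains}); instead we process a single request per step and rely on a fair enumeration to guarantee that every request is eventually met. I would start from the trivial object $u_0\colon 0\to\Ha$. Having constructed $u_0\to\cdots\to u_m$ together with the isometric linking maps $\imath_{(k,m)}\colon U_k\to U_m$, the bookkeeping designates a pair $(k,\rho)$ with $k\le m$, where $\rho$ ranges over a fixed countable family of requests attached to $U_k$, and we then produce $u_{m+1}$ and an isometric linking map $\imath_{m+1}\colon U_m\to U_{m+1}$ so that $\rho$ becomes approximately realized inside $U_{m+1}$.

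The heart of the matter is the single servicing step, where three constraints must be reconciled: the linking maps have to be genuine isometries (so that the direct limit is meaningful), the data must range over a countable family (so that bookkeeping applies), and the operators into $\Ha$ must stay nonexpansive and compatible. A family member attached to $U_k$ is a $\delta$-isometry $\hat e\colon U_k\to\hat V$ into a $\fK_0$-object $\hat v\colon\hat V\to\Ha$ with $\hat v\circ\hat e$ at distance $\le\delta$ from $u_k$, and $\delta$ rational. To service it I would feed $\hat e$, together with $u_k$ and $\hat v$, into the $X\oplus_f^\e Y$ construction of Lemma~\ref{keypoperator}: this produces a finite-dimensional space $\hat Z=U_k\oplus_{\hat e}^\delta\hat V$ with canonical isometric embeddings $i\colon U_k\to\hat Z$ and $j\colon\hat V\to\hat Z$, a nonexpansive operator $w\colon\hat Z\to\Ha$ satisfying $w\circ i=u_k$ and $w\circ j=\hat v$, and $\|j\circ\hat e-i\|\le\delta$. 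The crucial gain is that $i$ is now an \emph{exact} isometry. I would then amalgamate the span $U_m\xleftarrow{\imath_{(k,m)}}U_k\xrightarrow{i}\hat Z$ as in Diagram~(\ref{amalgamation}); since both legs are isometries and the operators $u_m,w$ agree on $U_k$, the push-out Lemma (Lemma~1 of Section~\ref{sec:po}) makes $U_{m+1}$ a $\fK$-object with isometric linking $\imath_{m+1}\colon U_m\to U_{m+1}$ and a nonexpansive $u_{m+1}\colon U_{m+1}\to\Ha$. Writing $\kappa\colon\hat Z\to U_{m+1}$ for the other push-out leg and setting $e'=\kappa\circ j$, one checks at once that $e'$ is an isometry, that $u_{m+1}\circ e'=\hat v$, and that $e'\circ\hat e=\kappa\circ j\circ\hat e$ is $\delta$-close to $\kappa\circ i=\imath_{(k,m+1)}$.

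To finish I would fix, for each $U_k$, a countable family of requests dense among all requests attached to $U_k$: targets drawn from the countable stock of $\fK_0$-objects, embeddings taken from a countable dense set of linear maps with respect to fixed bases, and tolerances rational. Enumerating the countable union of these families by a fair pairing over $\Nat\times\Nat$ that visits every pair infinitely often, and performing the servicing step each time, yields the sequence. Given an arbitrary $\fK$-request $(n,\e,e,v)$ as in $(\dagger)$, I would first invoke Lemma~\ref{invoke}(a) to replace $v$ by a $\fK_0$-object $\hat v\colon\hat V\to\Ha$ via a surjective $\delta$-isometry $q\colon V\to\hat V$ with $\|v-\hat v\circ q\|<\delta$; then $q\circ e$ is a $\delta$-isometry into $\hat V$, density of the fixed family locates a serviced member $\hat e$ near $q\circ e$, and composing its exact realization $e'$ with $q$ produces the required $\e$-isometric embedding of the original $V$, the closeness estimates of the servicing step transferring (with $\delta$ chosen small in terms of $\e$) to the two conditions in $(\dagger)$. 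I expect the main obstacle to be exactly this servicing step: an honest rational approximation renders the embedding $q\circ e$ inexact, yet the direct limit demands isometric linking maps, and it is precisely the $\oplus_f^\e$ construction of Lemma~\ref{keypoperator} that dissolves this tension by upgrading a $\delta$-isometry to a true isometric embedding into a finite-dimensional space carrying a compatible nonexpansive operator to $\Ha$. The residual difficulty is purely combinatorial—ensuring that one-request-per-stage processing still meets requests based on objects that only appear later—which the fair enumeration settles in the usual way.
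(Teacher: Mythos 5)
Your proof is correct and follows essentially the same route as the paper's: a fair enumeration of a countable stock of requests derived from the countable category $\fK_0$, serviced one at a time by amalgamation over the relevant $U_k$, with condition $(\dagger)$ then following from density and from the fact that each request is revisited. The only real difference is organizational: the paper enumerates exact $\fK_0$-morphisms and leaves the approximation of an arbitrary request entirely to the final ``density'' step, whereas you enumerate approximate embeddings and invoke Lemma~\ref{keypoperator} to exactify them inside the servicing step --- the same mechanism, just placed at a different stage.
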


\begin{proof}
We shall work in a countable subcategory $\fK_0$ of $\fK$ having the properties appearing in Lemma~\ref{invoke}.

First, we fix an enumeration $\{(f_n, k_n)\}_{n \in \omega}$ of all pairs of the form $(f,k)$, where $f$ is a morphism of $\fK_0$ and $k$ is a natural number, so that each pair occurs infinitely many times.
We construct a sequence $\{u_n\}_{n \in \omega}$ by induction, starting from the zero space with the zero operator.
Having defined $u_{n-1}$, we look at $(f_n, k_n)$.
If either $k_n \geq n$ or $\dom(f_n) \ne u_{k_n}$ then we set $u_n = u_{n-1}$.
So suppose that $k_n < n$ and $\dom(f_n) = u_{k_n}$.
Denote by $j$ the bonding arrow from $u_{k_n}$ to $u_{n-1}$.
Using the amalgamation property of $\fK_0$, we find arrows $g$, $h$ such that $h \cmp j = g \cmp f_n$.
Denote by $u_n$ the common co-domain of $h$ and $g$ and declare $h$ to be the bonding arrow from $u_{n-1}$ to $u_n$.

This finishes the description of the construction.
Condition ($\dagger$) follows from the ``density'' of $\fK_0$ in $\fK$ and from the fact that each appropriate pair $(f,k)$ appears infinitely many times in the enumeration.
\end{proof}

Actually, it can be shown that ($\dagger$) specifies the sequence $\{u_n\}$, up to an isomorphism in the appropriate category, although we shall not use this fact.

Consider the directed system of $p$-Banach spaces underlying the sequence we have just constructed:
$$
\begin{CD}
U_0@>\imath_1>> U_1@>\imath_2 >> U_2@>>> \cdots
\end{CD}
$$
set $U_\infty=\underrightarrow{\lim} \: U_n$ and let $u_\infty:U_\infty\to\Ha$ be direct limit of the operators $u_n$.
The main properties of $u_\infty$ are collected below.

\begin{theorem}\label{Thmrbgibr}
The space $U_\infty$ and the operator $u_\infty: U_\infty\to \Ha$ have the following properties:
\begin{enumerate}
	\item[(a)] The operator $u_\infty$ is nonexpansive and right-invertible -- in particular, its range is $\Ha$.
	\item[(b)] Both $U_\infty$ and $\ker(u_\infty)$ are linearly isometric to $\Gp$.
	\item[(c)] For every nonexpansive linear operator $\map s X \Ha$, where $X$ is a separable $p$-Banach space, there exists a linear isometric embedding $\map e X U_\infty$ such that
$s = u_\infty \cmp e.$
\end{enumerate}
\end{theorem}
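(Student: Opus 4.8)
The plan is to prove (c) first, to read off (a) from it, and then to deduce (b) from the uniqueness Theorem~\ref{completes}. That $u_\infty$ is nonexpansive needs no work: the $u_n$ are nonexpansive and compatible with the bonding maps, so the universal property of the direct limit produces a nonexpansive $u_\infty$.

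For (c), given a nonexpansive $\map s X \Ha$ with $X$ separable, I would write $X$ as the closure of an increasing union $\bigcup_k X_k$ of finite-dimensional subspaces and set $s_k = s|_{X_k}$, so that each $s_k$ is an object of $\fK$ and each inclusion $X_k \subset X_{k+1}$ is an $\fK$-morphism. Choosing a summable sequence of tolerances $\delta_k \downarrow 0$, I would construct by induction levels $n_0 < n_1 < \cdots$ and $\delta_k$-isometric embeddings $\map{e_k}{X_k}{U_{n_k}}$ with $u_{n_k} \cmp e_k$ within $\delta_k$ of $s_k$ and with $e_k|_{X_{k-1}}$ within $\delta_k$ of $\imath_{(n_{k-1},n_k)} \cmp e_{k-1}$. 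The inductive step is exactly where $(\dagger)$ enters: I amalgamate the object $u_{n_k}$ with $s_{k+1}$ over the (approximate) common part $X_k$—using Lemma~\ref{keypoperator} to carry the two operators into $\Ha$ along the amalgam—to manufacture a finite-dimensional object $\map v V \Ha$ extending $u_{n_k}$ and containing an isometric copy of $X_{k+1}$ on which $v$ is close to $s_{k+1}$; then $(\dagger)$ pushes $V$ almost isometrically and almost commutingly into some $U_{n_{k+1}}$, and restriction to $X_{k+1}$ yields $e_{k+1}$. Since the tolerances are summable, the maps $\imath_{n_k} \cmp e_k$ form a Cauchy sequence of embeddings whose limit is a genuine isometric embedding $\map e X {U_\infty}$ with $u_\infty \cmp e = s$ exactly.

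Part (a) is then immediate: applying (c) to the nonexpansive identity $\map{\mathrm{id}}{\Ha}{\Ha}$ (legitimate, as $\Ha$ is separable) yields an isometric right inverse $\map e \Ha {U_\infty}$ of $u_\infty$, whence $u_\infty$ is surjective and $P := e \cmp u_\infty$ is a nonexpansive idempotent on $U_\infty$ with range $e[\Ha]$ and kernel $\ker(u_\infty)$. For the first half of (b) I would check that $U_\infty$ is of almost universal disposition for finite-dimensional $p$-Banach spaces and quote Theorem~\ref{completes}; separability being clear, the content is the extension property. Given an isometry $\map g A B$ with $A \subset U_\infty$ finite-dimensional, I approximate $A$ inside some $U_n$ and must feed an $\fK$-object extending $u_n$ and containing $B$ into $(\dagger)$. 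This is the one place the local $1^+$-injectivity of $\Ha$ is essential: it lets me extend $u_n|_A$ to a $(1+\e)$-bounded operator on $B$, hence to a nonexpansive operator $v$ on the amalgam $V$ of $U_n$ and $B$ over $A$ with $v|_{U_n} = u_n$; applying $(\dagger)$ embeds $B$ almost isometrically into a later $U_m$ while almost fixing $A$, and Lemma~\ref{relax} completes the argument, so $U_\infty \cong \Gp$.

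For the remaining assertion $\ker(u_\infty) \cong \Gp$ I would run the same scheme inside the kernel, the difference being the choice of target operator. Given finite-dimensional $A \subset \ker(u_\infty)$ and an isometry $\map g A B$, note that $u_\infty$ is approximately zero on the copy of $A$ in some $U_n$; I amalgamate $u_n$ with an operator on $B$ that extends $u_n|_A$ but is small on the new directions, so that the resulting $\map v V \Ha$ is small on $B$, and $(\dagger)$ then gives an almost-isometric $\map f B {U_\infty}$ with $u_\infty \cmp f$ small and $f|_A$ close to the inclusion. The projection $P$ furnishes the decisive correction: replacing $f$ by $(\mathrm{id} - P) \cmp f$ forces the image exactly into $\ker(u_\infty)$ at the cost of a perturbation of size at most $\|u_\infty \cmp f\|$, which is small, so the corrected map is still an almost isometry almost fixing $A$; Lemma~\ref{relax} and Theorem~\ref{completes} then identify $\ker(u_\infty)$ with $\Gp$. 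I expect the main obstacle to lie in the back-and-forth bookkeeping of part (c)—arranging the nested tolerances so that the limit map is exactly isometric and exactly intertwines $s$ with $u_\infty$—and, for the kernel, in verifying that the correction by $P$ indeed costs only a controlled, vanishing perturbation.
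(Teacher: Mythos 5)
Your proposal follows the paper's proof essentially step for step: (c) by an inductive construction with summable tolerances combining Lemma~\ref{keypoperator}, amalgamation in $\fK$ and condition $(\dagger)$; (a) by applying (c) to the identity of $\Ha$; $U_\infty\cong\Gp$ via Lemma~\ref{relax} and Theorem~\ref{completes} using the local $1^+$-injectivity of $\Ha$; and $\ker(u_\infty)\cong\Gp$ by first proving an approximate lifting property and then correcting with $\mathrm{id}-r\circ u_\infty$. The only detail you gloss over is the passage from the $(1+\e)$-bounded extension given by local $1^+$-injectivity to the genuinely nonexpansive operator that $(\dagger)$ requires; the paper handles this by renorming the push-out with $|v|=\max\bigl(\|v\|_{\PO},\|\widehat u_n(v)\|_{\Ha}\bigr)$, which keeps the embedding of $U_n$ isometric while turning the embedding of $Y$ into a $\delta$-isometry.
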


Note that if $\Ha = \Gp$ then $\Pp = r \cmp u_\infty$, where $\map r \Gp \Gp$ is a fixed right inverse of $u_\infty$, provides the ``universal'' projection announced at the beginning of the Section.

\begin{proof}
Obviously, $u_\infty$ is nonexpansive, being a direct limit of nonexpansive operators. That it is right-invertible will follow from (c), just taking $s$ as the identity on $\Ha$.

In order to prove (c), fix a nonexpansive operator
$\map s X \Ha$ from a separable $p$-Banach space $X$ and let $(X_n)$ be an increasing sequence of finite-dimensional subspaces whose union is dense in $X$, with $X_0=0$.
Set $s_n=s\restriction X_n$ and $\e_n=2^{-n/p}$.

We shall construct inductively nonexpansive $\e_n$-isometries $\map {e_n}{X_n}{U_{k_n}}$ so that the following condition is satisfied:
\begin{enumerate}
	\item[($*$)]\quad $u_{k_n} \cmp e_n$ is $\e_n$-close to $s_n$ and $e_{n+1} \restriction X_n$ is $(\e_n^p+\e_{n+1}^p)^{1/p}$-close to $\imath_{(k_n,k_{n+1})}\circ e_n$.
\end{enumerate}
Having defined $e_n: X_n\to U_{k_n}$ with $\|s_n-u_{k_n}\circ e_n\|\leq\e_n$, we may apply Lemma~\ref{keypoperator} with $f=e_n$ to get the commutative diagram
$$\xymatrix{
U_{k_n} \ar[d]_j \ar[drr]^{u_{k_n}}  \\
X_n\oplus_{e_n}^{\e_n} U_{k_n} \ar[rr]^{s_n\oplus u_{k_n}}& & \Ha \\
 X_n \ar[u]^i \ar[urr]_{s_n}\\
}$$
which shows that $i$ is a $\fK$-morphism from $s_n$ to  ${s_n\oplus u_{k_n}}$.  On the other hand, the inclusion  of $X_n$ into $X_{n+1}$ we momentarily denote by $a$ is a $\fK$-morphism from $s_n$ to $s_{n+1}$ and amalgamating $i$ and $a$ we arrive to the following commutative diagram
 $$\xymatrix{
 U_{k_n} \ar[dd]_j  \ar[dddrrrrrr]^{u_{k_n}}\\
 &&&&&&\\
X_n\oplus_{e_n}^{\e_n} U_{k_n}  \ar[rr]^{a'} \ar[drrrrrr]_{\quad s_n\oplus u_{k_n}} & & W \ar[drrrr]^w \\
& & & & & & \Ha \\
X_n \ar[rr]_a \ar[uu]^i \ar[urrrrrr]^{s_n} & & X_{n+1} \ar[uu]^{i'} \ar[urrrr]_{s_{n+1}}
}$$
Here, $W$ is a finite-dimensional $p$-normed space and $w$ nonexpansive.
Having $(\dagger)$ in mind we can find $k_{n+1}>k_n$ and a nonexpansive $\e_{n+1}$-isometry $\ell: W\to U_{k_{n+1}}$ such that $u_{k_{n+1}}\circ\ell$ is $\e_{n+1}$-close to $w$ and $\ell\circ a'\circ j$ is $\e_{n+1}$-close to $\imath_{(k_n, k_{n+1})}:U_{k_n}\to U_{k_{n+1}}$.

Setting $e_{n+1}=\ell\circ i': X_{n+1}\to W\to U_{k_{n+1}}$ we obtain an $\e_{n+1}$-isometry fulfilling the requirements in $(*)$. Indeed,
$$
\|u_{k_{n+1}}\circ e_{n+1}-s_{n+1}\|= \|u_{k_{n+1}}\circ \ell\circ i' -s_{n+1}\|= \|u_{k_{n+1}}\circ \ell\circ i' - w\circ i'\|\leq \|u_{k_{n+1}}\circ \ell - w\|\leq \e_{n+1},
$$
while
\begin{align*}
\|\imath_{(k_n, k_{n+1})}\circ e_n -e_{n+1}\circ a\|^p&\leq \|\imath_{(k_n, k_{n+1})}\circ e_n -\ell\circ a'\circ j\circ e_n\|^p +
 \|\ell\circ a'\circ j\circ e_n -e_{n+1}\circ a\|^p\\
 & = \|\imath_{(k_n, k_{n+1})}\circ e_n -\ell\circ a'\circ j\circ e_n\|^p +
 \|\ell\circ a'\circ j\circ e_n -\ell\circ i' \circ a\|^p\\
 &= \|\imath_{(k_n, k_{n+1})}\circ e_n -\ell\circ a'\circ j\circ e_n\|^p +
 \|\ell\circ a'\circ j\circ e_n -\ell  \circ a'\circ i\|^p\\
 &\leq \|\imath_{(k_n, k_{n+1})}-\ell\circ a'\circ j\|^p \cdot \|e_n\|^p +
 \|\ell\circ a'\|^p \cdot \|j\circ e_n-i\|^p\\
 &\leq \e_{n+1}^p+\e_n^p
 \end{align*}
since $\|j\circ e_n-i\|\leq\e_n$.
Finally, the sequence $\{e_n\}$ ``converges'' to an isometric embedding $e:X\to U_\infty$ satisfying $u_\infty \cmp e = s$, which proves (c).

We pass to the proof of (b). To prove that $U_\infty$ is isometric to $\mathbb G_p$, it suffices to prove that it is of almost universal disposition for the class of finite-dimensional $p$-Banach spaces and we shall show that $U_\infty$ satisfies the hypothesis of Lemma~\ref{relax}. Here, the hypothesis that $\Ha$ is locally $1^+$-injective amongst $p$-Banach spaces enters.

So, let $g: X\to Y$ be an isometry, where $X$ is a subspace of $U_\infty$ and $Y$ a finite-dimensional $p$-normed space. As we did in the proof of Lemma~\ref{contains}, we fix a small $\delta>0$ and we take a nonexpansive $\delta$-isometry $w:X\to U_n$ such that $\|w(x)-x\|_{U_\infty}\leq \delta\|x\|$. Let us form the push-out square
$$
\begin{CD}
X@> g >> Y\\
@V w VV @VV w'V\\
U_n @>g'>> \PO
\end{CD}
$$
Here $g'$ is an isometry and $w'$ is a contractive $\delta$-isometry, according to Lemma~\ref{po}.
As $\Ha$ is locally $1^+$-injective and $\PO$ is finite-dimensional there is an operator $\widehat u_n: \PO\to\Ha$ such that $u_n=\widehat u_n\circ g'$, with $\|\widehat u_n\|\leq 1+\delta$. Next we amend the $p$-norm in $\PO$ to make $\widehat u_n$ nonexpansive: for instance we may take $|v|=\max(\|v\|_{\PO}, \|\widehat u_n(v)\|_\Ha)$. Now, if $V$ denotes the space $\PO$ furnished with this new norm, then $g':U_n\to V$ is still isometric and $\widehat u_n: V\to\Ha$ becomes contractive and we may use $(\dagger)$ to get $m>n$ and a $\delta$-isometric embedding $g'':V\to U_m$ such that $\|g''\circ g'-\imath_{(n,m)}\|\leq \delta$. Now, if $\delta>0$ is small enough, the composition
$$
\begin{CD}
f: Y@> w' >> \PO @>{\bf {\rm Id}}>> V @> g'' >> U_m @>>> U_\infty
\end{CD}
$$
is an $\e$-isometry such that $\|f(g(x))-x\|_U\leq \e\|x\|$ for every $x\in X$. This shows that $U_\infty$ is isometric to $\mathbb G_p$.
\medskip

It only remains to check that $\ker u_\infty$ is isometric to $\Gp$. We first prove that the operator $u_\infty:U_\infty\to\Ha$ has the following additional property:
\begin{itemize}
	\item[($\ddagger$)] Suppose $E$ is a subspace of a finite-dimensional $p$-Banach space $F$. If $g:F\to \Ha$ is nonexpansive and $e:E\to U_\infty$ is an isometry such that $u_\infty\circ e=g\restriction E$, then for each $\delta>0$ there is a $\delta$-isometry $f:F\to U_\infty$ satisfying
$\| f \restriction E - e \| < \delta$ and  $ \| u_\infty \cmp f - g \| < \delta$.
\end{itemize}

Indeed, after taking a small perturbation, we may assume that $\map e E {U_n}$ is an $\e$-isometric embedding and $u_\infty \cmp e$ is $\e$-close to $g \restriction E$. Applying Lemma~\ref{keypoperator} to $e:E\to U_n$ and the operators $g:E\to\Ha$ and $u_n:U_n\to\Ha$, we get the commutative diagram
$$\xymatrix{
U_{n} \ar[d]_j \ar[drr]^{u_{n}}  \\
E\oplus_e^\e U_{n} \ar[rr]^{g\oplus u_{n}}& & \Ha \\
 E \ar[u]^i \ar[urr]_{g}\\
}$$
 with $\|j \circ e- i\|\leq\e$. Now, amalgamating $i:E\to E\oplus_e^\e U_{n}$ which is a morphism from $g\restriction E$ to $g\oplus u_n: E\oplus_e^\e U_n\to\Ha$ and the inclusion of $E\to F$ regarded as a morphism from $g\restriction E$ to $g:F\to\Ha$ we obtain a finite-dimensional $p$-normed space $W$ and a commutative diagram
 $$\xymatrix{
 U_{n} \ar[dd]_j  \ar[dddrrrrrr]^{u_{n}}\\
 &&&&&&\\
E\oplus_e^\e U_{n}  \ar[rr]^{a'} \ar[drrrrrr]_{s_n\oplus u_{k_n}} & & W \ar[drrrr]^w \\
& & & & & & \Ha \\
E\ar[rr]_a \ar[uu]^i \ar[urrrrrr]^{s_n} & & F \ar[uu]^{i'} \ar[urrrr]_{g}
}$$
with $\|w\|\leq 1$. Applying now $(\dagger)$ to $w$ and the embedding $a'\circ j$ we obtain $m>n$ and an almost isometry $v:W\to U_m$ such that $u_m\circ v$ is close to $w$ and  $v\circ a'\circ j $ is close to $\imath_{(n,m)}$. Finally, the composition
$$
\begin{CD}
f: F@> i' >>  W @> v >> U_m @>>> U_\infty
\end{CD}
$$
does the trick.

After this preparation, let $F$ be a finite-dimensional $p$-normed space, $e:E\to\ker u_\infty$ an isometry, where $E$ is a subspace of $F$, and $\e>0$. We shall construct an $\e$-isometry $f:F\to\ker u_\infty$ such that $\|f(x)-e(x)\|\leq \e\|x\|$ for every $x\in E$. This will show that $\ker u_\infty$ is of almost universal disposition, thus completing the proof.

Fix some small $\delta$ and apply $(\ddagger)$ taking $g$ as the zero operator from $F$ to $\Ha$ to get a $\delta$-isometry $f':F\to U_\infty$ such that
$\| f' \restriction E - e \| < \delta$ and  $ \| u_\infty \cmp f' \| < \delta$. Of course, we cannot guarantee that $f'$ takes values in $\ker u_\infty$. To amend this, let $r:\Ha\to U_\infty$ be a right-inverse for $u_\infty$, and set $f=({\bf 1}_{U_\infty}-r\circ u_\infty)\circ f'$, that is, $f(x)=f'(x)-r(u_\infty(f'(x)))$. Then $f$ takes values in $\ker u_\infty$ since $u_\infty\circ f=0$ and, moreover, $\|f-f'\|=\|r\circ u_\infty\circ f'\|\leq \delta$. Thus for $\delta$ sufficiently small $f:F\to\ker u_\infty$ is an $\e$-isometry with $\| f \restriction E - e \| < \e$.
\end{proof}

\begin{corollary}
The following spaces are linearly isomorphic to $\Gp$:
\begin{itemize}
\item[(a)] $\Gp\oplus\Gp$ as well as all finite direct sums $\Gp \oplus \dots \oplus \Gp$.
\item[(b)] $c_0(\Gp)$, the space of all sequences converging to $0$ in $\Gp$, endowed with the maximum $p$-norm.
\item[(c)] $C(\Delta, \Gp)$, the space of all continuous functions from the Cantor set $\Delta$ to $\Gp$, with the sup quasinorm.
\end{itemize}
\end{corollary}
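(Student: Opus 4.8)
The plan is to combine Theorem~\ref{Thmrbgibr} with the Pe\l czy\'nski decomposition method. The basic observation is that Theorem~\ref{Thmrbgibr} provides a \emph{complementation engine}: for every separable, locally $1^+$-injective $p$-Banach space $\Ha$, running the construction of this section with that particular $\Ha$ produces $U_\infty$ together with $u_\infty\colon U_\infty\to\Ha$; by part~(a) the operator $u_\infty$ has a bounded right inverse $r$, so $r\circ u_\infty$ is a bounded projection of $U_\infty$ onto $r[\Ha]\cong\Ha$, and since $U_\infty$ is isometric to $\Gp$ by part~(b), we conclude that \emph{every separable, locally $1^+$-injective $p$-Banach space is isomorphic to a complemented subspace of $\Gp$}. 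I will also use the decomposition method in the following form: if $A\cong c_0(A)$, $A$ is isomorphic to a complemented subspace of $B$, and $B$ is isomorphic to a complemented subspace of $A$, then $A\cong B$. Its proof manipulates only bounded projections, finite sums and $c_0$-sums, using $A\oplus c_0(A)\cong c_0(A)$ and $c_0(c_0(A))\cong c_0(A)$, and hence is valid verbatim for quasi-Banach spaces.

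For (a) I take $\Ha=\Gp$, which is admissible by Proposition~\ref{inj}(b). Then $r\circ u_\infty$ splits $U_\infty\cong r[\Gp]\oplus\ker u_\infty$, with $r[\Gp]\cong\Gp$ and $\ker u_\infty\cong\Gp$ (the latter isometrically, by Theorem~\ref{Thmrbgibr}(b)); as $U_\infty\cong\Gp$ this already yields $\Gp\cong\Gp\oplus\Gp$, and an evident induction gives $\Gp\cong\Gp\oplus\dots\oplus\Gp$. Since only finitely many summands occur, the specific quasinorm placed on the direct sum is immaterial up to isomorphism.

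For (b) I apply the method with $A=c_0(\Gp)$ and $B=\Gp$: here $A\cong c_0(A)$ because $c_0(c_0(\Gp))\cong c_0(\Gp)$ (reindexing $\N\times\N\cong\N$), $B=\Gp$ is complemented in $A$ by the first-coordinate projection, and $A=c_0(\Gp)$ is complemented in $B=\Gp$ by the engine; hence $c_0(\Gp)\cong\Gp$. For (c) I exploit the isomorphism $\Gp\cong c_0(\Gp)$ just obtained, taking $A=\Gp$ (so $A\cong c_0(A)$) and $B=C(\Delta,\Gp)$: here $\Gp$ is complemented in $C(\Delta,\Gp)$ by the projection onto the constant functions, and $C(\Delta,\Gp)$ is complemented in $\Gp$ again by the engine; hence $\Gp\cong C(\Delta,\Gp)$. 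Both $c_0(\Gp)$ and $C(\Delta,\Gp)$ are separable, since $\Delta$ is compact metric and $\Gp$ is separable.

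The main obstacle is thus establishing local $1^+$-injectivity of $c_0(\Gp)$ and of $C(\Delta,\Gp)$. For $c_0(\Gp)$ this is easy: given finite-dimensional $Y\subseteq X$ and $t=(t_n)\colon Y\to c_0(\Gp)$ one has $\|t_n\|\to0$, so extending each coordinate to $T_n\colon X\to\Gp$ with $\|T_n\|\le\lambda\|t_n\|$ (Proposition~\ref{inj}(b)) gives an extension $T=(T_n)$ with $\|T\|=\sup_n\|T_n\|\le\lambda\|t\|$, where $\lambda$ may be taken arbitrarily close to $1$. The delicate case is $C(\Delta,\Gp)$, where the extensions must vary continuously with the point of $\Delta$. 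Since $Y$ is finite-dimensional, $\delta\mapsto t_\delta:=t(\cdot)(\delta)$ is uniformly continuous from $\Delta$ into the operators $Y\to\Gp$; I partition $\Delta$ into finitely many clopen pieces on which $t_\delta$ oscillates by at most $\e\|t\|$, pick $\delta_i$ in the $i$-th piece, extend each $t_{\delta_i}$ by local $1^+$-injectivity of $\Gp$, and declare the extension to be constant on the $i$-th piece. This produces a continuous $T_0\colon X\to C(\Delta,\Gp)$ with $\|T_0\|\le\lambda\|t\|$ and $\|T_0\restriction Y-t\|\le\e\|t\|$. As this is only an approximate extension, I repair it by successive approximation, applying the same device to the residual $t-T_0\restriction Y$ and iterating. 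Because the norm is a $p$-norm and $\|T_k\|\le\lambda\e^k\|t\|$, the series $T=\sum_k T_k$ converges to an exact extension with $\|T\|\le\lambda(1-\e^p)^{-1/p}\|t\|$; letting $\lambda\downarrow1$ and $\e\downarrow0$ shows that $C(\Delta,\Gp)$ is locally $1^+$-injective, and the decomposition method then delivers both (b) and (c).
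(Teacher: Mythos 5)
Your argument is correct and follows the same overall strategy as the paper: Theorem~\ref{Thmrbgibr} applied to a separable, locally $1^+$-injective $\Ha$ gives $\Gp\cong U_\infty\cong\ker(u_\infty)\oplus\Ha\cong\Gp\oplus\Ha$, from which (a) is immediate, and (b), (c) then follow by Pe\l czy\'nski's decomposition method once $c_0(\Gp)$ and $C(\Delta,\Gp)$ are known to be locally $1^+$-injective. Where you genuinely diverge is in that last point. The paper disposes of it in one line, observing that both spaces are completions of unions of chains of finite sums $\Gp\oplus\dots\oplus\Gp$ with the maximum quasinorm, and that local $1^+$-injectivity passes to such completions; this is true, but it silently requires a perturbation argument, since an operator from a finite-dimensional $Y$ into the completion need not take values in any member of the chain. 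You instead prove the two injectivity statements directly: a coordinatewise extension for $c_0(\Gp)$ (using that $\|t_n\|\to 0$ in operator norm, which holds by equicontinuity on the compact unit ball of the finite-dimensional $Y$), and for $C(\Delta,\Gp)$ a clopen partition of $\Delta$ on which $\delta\mapsto t_\delta$ oscillates by at most $\varepsilon\|t\|$, followed by a successive-approximation scheme whose convergence is exactly what the $p$-triangle inequality delivers. Your route is longer but fully self-contained, and in fact supplies the approximation step that the paper's chain argument leaves implicit; the paper's route is shorter and treats both spaces uniformly with a single observation. A cosmetic difference: for (b) the paper does not need the full decomposition scheme, only the shift $\Gp\oplus c_0(\Gp)\cong c_0(\Gp)$, whereas you invoke the general three-hypothesis version; both are fine, and your explicit remark that the method survives without local convexity is worth keeping.
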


\begin{proof}
We first observe that if $\mathbb H$ is separable and $1^+$-locally injective amongst $p$-Banach spaces, then $\Gp\oplus\mathbb H$ is linearly isomorphic to $\Gp$. Indeed, if $u_\infty: U_\infty\to \mathbb H$ is the operator appearing in Theorem~\ref{Thmrbgibr} and $r:\mathbb H\to U_\infty$ is a right inverse for $u_\infty$, then the mapping $x\in U_\infty\mapsto (x-r(u_\infty(x)),u_\infty(x))\in \ker(u_\infty)\oplus\mathbb H$ is a linear homeomorphism and we already know that both $U_\infty$ and $\ker(u_\infty)$ are isomorphic to $\Gp$.

Now, taking $\mathbb H=\Gp$ which is locally $1^+$-injective according to Proposition~\ref{inj}(b), we see that $\Gp$ is isomorphic to $\Gp \oplus \Gp$ hence also to any finite sum $\Gp \oplus \dots \oplus \Gp$. This proves (a).

To prove (b) and (c), note that both $c_0(\Gp)$  and $C(\Delta, \Gp)$ are  locally $1^+$-injective, being the completion of the union of a chain of spaces of the form $\Gp \oplus \dots \oplus \Gp$ endowed with the maximum quasinorm; all these spaces are locally $1^+$-injective, because $\Gp$ is.
Hence, $\Gp$ is isomorphic to $\Gp \oplus c_0(\Gp)$ which is isomorphic to $c_0(\Gp)$, which proves (b).
As for (c), we know that $\Gp$ is isomorphic to $\Gp\oplus C(\Delta, \Gp)$ and since $\Gp$ lives complemented in $C(\Delta, \Gp)$ (as the subspace of constant functions) and $\Gp$ is isomorphic to its square, Pe\l czy\'nski decomposition method applies: indeed, if $C(\Delta,\Gp)\approx \Gp \oplus Z$, then $\Gp\approx \Gp \oplus C(\Delta, \Gp)\approx \Gp \oplus \Gp \oplus Z \approx  \Gp \oplus Z\approx C(\Delta,\Gp)$.
\end{proof}


\section{Miscellaneous remarks and questions}\label{closing}

\subsection{Mazur's ``rotations'' problem}
A quasi-Banach space is said to be almost isotropic if the orbits of the isometry group are dense in the unit sphere: if $\|x\|=\|y\|= 1$, then for every $\e>0$ there is a surjective isometry $u$ such that $\|y-u(x)\|\leq \e$. If this condition holds even for $\e=0$, the space is said to be isotropic: the isometry group acts transitively on the sphere.

A notorious problem that Banach attributes to Mazur in his {\it ``Th\'eorie des Op\'erations Lin\'eaires''} asks whether $\ell_2$ is the only separable isotropic Banach space; cf. \cite[p. 242]{banach}. This is the problem mentioned by Gurari\u\i\ in the title of \cite{g} and, as far as we know, is still open. We may refer the reader to \cite{c-extracta, b-rp} for two complementary surveys on the topic.

The following remark is immediate from Theorem~\ref{completes}.

\begin{corollary}\label{almost isotropic}
The space $\mathbb G_p$ is almost isotropic. \hfill$\square$
\end{corollary}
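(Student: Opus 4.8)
The plan is to deduce almost isotropy directly from the uniqueness statement in Theorem~\ref{completes}, which was engineered to produce surjective isometries matching prescribed finite-dimensional data. Almost isotropy requires that for any two unit vectors $x,y\in\Gp$ and any $\e>0$ there be a surjective isometry $u:\Gp\to\Gp$ with $\|u(x)-y\|\leq\e$. The natural way to manufacture such a $u$ is to apply Theorem~\ref{completes} with $U=V=\Gp$ and with a carefully chosen finite-dimensional subspace and map whose ``approximate'' agreement forces $u(x)$ to be close to $y$.

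First I would take $X=\K x$, the one-dimensional subspace of $\Gp$ spanned by $x$. Since $\|x\|=\|y\|=1$, the linear map $f:X\to\Gp$ sending $x\mapsto y$ (i.e. $\L x\mapsto \L y$) is an isometry, hence in particular a strict $\e'$-isometry for any $\e'\in(0,1)$. Theorem~\ref{completes} then yields a bijective isometry $h:\Gp\to\Gp$ with $\|h(z)-f(z)\|\leq \e'\|z\|$ for every $z\in X$; specializing to $z=x$ gives $\|h(x)-y\|\leq \e'$. Choosing $\e'\leq\e$ completes the construction, since $h$ is exactly the required surjective isometry $u$.

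The one point that deserves care is the hypothesis $\e\in(0,1)$ in Theorem~\ref{completes}: the statement is framed for strict $\e$-isometries with $\e$ strictly positive, whereas an honest isometry corresponds to the degenerate case. This is harmless because a genuine isometry on a finite-dimensional (here one-dimensional) space is a strict $\e'$-isometry for every $\e'\in(0,1)$, so one simply runs the theorem with $\e'=\min(\e,\tfrac12)$ and reads off $\|h(x)-y\|\leq\e'\leq\e$. I do not anticipate any real obstacle; the work has already been done in Theorem~\ref{completes}, and the corollary is essentially a one-line specialization, which is why the excerpt marks it with $\hfill\square$ and presents no separate argument.
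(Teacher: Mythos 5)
Your proposal is correct and is exactly the argument the paper intends: the corollary is stated as ``immediate from Theorem~\ref{completes}'', and the intended specialization is precisely yours --- take $U=V=\mathbb{G}_p$, $X=\mathbb{K}x$, and the isometry $x\mapsto y$ (which is a strict $\e$-isometry for every $\e\in(0,1)$) to obtain a bijective isometry $h$ with $\|h(x)-y\|\leq\e$. Nothing further is needed.
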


It is well-known that $\mathbb G$ (``our'' $\mathbb G_p$ when $p=1$) is not isotropic. However the standard argument depends on Mazur's theorem about the existence of smooth points on any separable Banach space and this argument is not available when $p<1$.

It is worth remarking that the notion of ``almost isotropic space'' that Gurari\u\i\ manages in \cite{g} is  stronger than ours: For every $\e > 0$, every linear isomorphism $f$ between finite-dimensional subspaces should extend to a bijective linear isomorphism $\tilde f$ satisfying $\| \tilde f \| \leq (1+\e) \|f\|$ and $\| \tilde f ^{-1} \| \leq (1+\e) \|f\|$.
Anyway, it is clear from the proof of \cite[Theorem 3]{g} that the spaces $\Gp$ are ``almost isotropic'' in Gurari\u\i\/ 's sense for all $p\in(0,1]$.

\subsection{Ultrapowers of $\mathbb G_p$}
There is an alternative proof of Theorem~\ref{w1} which is based on the ultraproduct construction; see \cite{k84}. Let $(X_i)$ be a family of $p$-Banach spaces indexed by $I$ and let $\mathscr U$ be a countably incomplete ultrafilter on $I$. Then the space of bounded families $\ell_\infty(I,X_i)$ with the quasinorm $\|(x_i)\|=\sup_i \|x_i\|$ is a $p$-Banach space and $c_0^\mathscr U(X_i)=\{(x_i): \lim_\mathscr U \|x_i\|=0\}$ is a closed subspace of $\ell_\infty(I,X_i)$. The ultraproduct of the family $(X_i)$ with respect to $\mathscr U$, denoted by $[X_i]_\mathscr U$, is the quotient space $\ell_\infty(I,X_i)/c_0^\mathscr U(X_i)$ with the quotient quasinorm. The class of the family $(x_i)$ in $(X_i)_\mathscr U$ is denoted by $[(x_i)]$.

 The quasinorm in $[X_i]_\mathscr U$ can be computed as $\|[(x_i)]\|=\lim_\mathscr U\|x_i\|$.

When all the spaces $X_i$ are the same, say $X$, the ultraproduct is called the ultrapower of $X$ following $\mathscr U$.
One has the following generalization of \cite[Proposition 5.7]{accgm} for which we provide a simpler proof.

\begin{proposition}
If $\mathscr U$ is a countably incomplete ultrafilter on the integers, then $[\mathbb G_p]_\mathscr U$ is a $p$-Banach space of universal disposition for separable $p$-Banach spaces and its density character is the continuum.
\end{proposition}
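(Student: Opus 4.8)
The plan is to establish the two assertions separately, treating universal disposition first; the lower bound for the density character will then follow from it via Proposition~\ref{PBenYakHay}, while the upper bound is an elementary computation in the quotient. For universal disposition, fix an isometry $g\colon X\to Y$ with $Y$ a separable $p$-Banach space and $X$ a subspace of $[\Gp]_\mathscr U$; identifying $X$ with $g[X]$ we may assume $X\subseteq Y$ and that $g$ is the inclusion, so the task is to extend the inclusion $X\hookrightarrow[\Gp]_\mathscr U$ to a linear isometry $f\colon Y\to[\Gp]_\mathscr U$. I would fix dense sequences $(x_j)$ in $X$ and $(y_j)$ in $Y$, set $E_k=\operatorname{span}\{x_1,\dots,x_k\}\subseteq X$ and $F_k=\operatorname{span}\{x_1,\dots,x_k,y_1,\dots,y_k\}\subseteq Y$, so that $E_k\subseteq F_k$ are finite dimensional with $\bigcup_k E_k$ dense in $X$ and $\bigcup_k F_k$ dense in $Y$, and fix for each $j$ a bounded representative $((x_j)_i)_i$ of $x_j$.

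The proof rests on two ingredients applied coordinatewise. The first is the standard local representation of finite-dimensional subspaces of an ultrapower: since the $p$-norm on $[\Gp]_\mathscr U$ is $\lim_\mathscr U$ of the coordinate norms and the unit sphere of the finite-dimensional space $E_k$ is compact, for $\mathscr U$-almost all $i$ the assignment $x_j\mapsto (x_j)_i$ $(j\le k)$ is an $\tfrac1k$-isometry $\rho_i^k$ of $E_k$ onto $E_k^i:=\operatorname{span}\{(x_1)_i,\dots,(x_k)_i\}\subseteq\Gp$. The second is almost universal disposition of $\Gp$ via Lemma~\ref{helpful}: writing $\iota\colon E_k\hookrightarrow F_k$ for the inclusion, the composite $\iota\circ(\rho_i^k)^{-1}\colon E_k^i\to F_k$ is a strict $O(\tfrac1k)$-isometry from the subspace $E_k^i$ of $\Gp$ into the finite-dimensional space $F_k$, so Lemma~\ref{helpful} produces a $\tfrac1k$-isometry $\sigma_i^k\colon F_k\to\Gp$ with $\|\sigma_i^k(x_j)-(x_j)_i\|=O(\tfrac1k)$ for all $j\le k$.

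It remains to glue the stagewise maps, which is where the only real work lies. Using that $\mathscr U$ is countably incomplete, pick $A_1\supseteq A_2\supseteq\cdots$ in $\mathscr U$ with $\bigcap_k A_k=\varnothing$ and with $A_k$ contained in the set of indices $i$ that are good for stage $k$; then $n(i):=\max\{k:i\in A_k\}$ is finite for each $i$ and $\lim_\mathscr U n(i)=\infty$. Setting $\sigma_i:=\sigma_i^{n(i)}$ and $f_0(y):=[(\sigma_i(y))_i]$ for $y\in\bigcup_k F_k$ gives a well-defined bounded map, which is genuinely $\K$-linear because each $\sigma_i$ is linear on $F_{n(i)}$ and the $F_k$ are nested. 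Since $\tfrac1{n(i)}\to 0$ along $\mathscr U$ one gets $\|f_0(y)\|=\lim_\mathscr U\|\sigma_i(y)\|=\|y\|$, so $f_0$ is an isometry, and $\|\sigma_i(x_j)-(x_j)_i\|\to 0$ along $\mathscr U$ forces $f_0(x_j)=x_j$, so $f_0$ is the inclusion on the dense subspace $\bigcup_k E_k$ of $X$. Taking completions, $f_0$ extends to a linear isometry $f\colon Y\to[\Gp]_\mathscr U$ with $f(g(x))=x$ for every $x\in X$. I expect the main obstacle to be precisely this step: arranging the coordinatewise applications of almost universal disposition with uniformly controlled errors so that the diagonal $(\sigma_i)$ assembles into a single, genuinely linear exact isometry; everything else is routine bookkeeping.

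For the density character, the upper bound is immediate: as $\Gp$ is separable, bounded sequences valued in a fixed countable dense subset of $\Gp$ form a dense subset of $\ell_\infty(\Nat,\Gp)$ of cardinality $\mathfrak c$, so $\dens\,\ell_\infty(\Nat,\Gp)\le\mathfrak c$ and the quotient $[\Gp]_\mathscr U$ can only have smaller density. For the lower bound, taking $X=0$ in the universal disposition just proved shows that $[\Gp]_\mathscr U$ contains an isometric copy of every separable $p$-Banach space, in particular of the $2$-dimensional Hilbert space, and it plainly satisfies condition $(\checkmark)$ of Proposition~\ref{PBenYakHay} since every $3$-dimensional $p$-Banach space is separable. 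That proposition yields $\dens[\Gp]_\mathscr U\ge\mathfrak c$, so the density character equals the continuum.
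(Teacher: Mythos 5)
Your argument is correct and is essentially the paper's: both proofs represent the finite-dimensional pieces of $X$ coordinatewise in $\Gp$, apply Lemma~\ref{helpful} in each coordinate, and glue via a diagonal index $n(i)$ obtained from countable incompleteness (the paper additionally reduces to $\dim(Y/X)=1$ and packages the gluing through auxiliary ultraproducts $[X^{n(i)}]_{\mathscr U}$, $[Y^{n(i)}]_{\mathscr U}$, whereas you define the limit map directly on a dense subspace of $Y$ and also spell out the density-character bounds, which the paper leaves implicit). One small repair: choose the dense sequence $(x_j)$ in $X$ to be linearly independent, as the paper does, so that the assignment $x_j\mapsto (x_j)_i$ actually determines a well-defined linear map $\rho_i^k$ on $E_k$.
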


\begin{proof}
We denote by $I$ the index set supporting $\mathscr U$. Let $X$ be a separable subspace of $[\mathbb G_p]_\mathscr U$ and $g:X\to Y$ an isometry, where $Y$ is any separable $p$-Banach space. We will prove that there is an isometry $f:Y\to [\mathbb G_p]_\mathscr U$ such that $f(g(x))=x$ for every $x\in X$.
Clearly, we may and do assume that $Y/X$ has dimension one.

So, let $(x^n)$ be a normalized, linearly independent sequence whose linear span is dense in $X$ and $y^0\in Y\backslash X$.
Let $X^n$ be the subspace spanned by $(x^1,\dots, x^n)$ in $X^n$ and $Y^n$ the subspace spanned by $g[X^n]$ and $y^0$ in $Y$.

Also,
 let us fix representatives $(x_i^n)$ so that $x^n=[(x_i^n)]$ for every $n$. We may assume $\|x_i^n\|=1$ for every $n$ and every $i$.
For $i\in I$ and $n\in\mathbb N$, let us denote by $X_i^n$ the subspace of $\mathbb G_p$ spanned by $(x_i^1,\dots,x_i^n)$. We define a linear map $\jmath_{n,i}: X^n\to X^n_i$ by letting $\jmath_{n,i}(x^k)= x^k_i$ for $1\leq k\leq n$ and linearly on the rest.

To proceed, we observe that the sets
$$
I^n_\e=\{i\in I \text{ such that $\jmath_{n,i}: X^n\to X^n_i$ is a strict $\e$-isometry}\}
$$
are in $\mathscr U$ for every $n$ and every $\e>0$. Let $(J_n)$ be a sequence of subsets of $\mathscr U$ with $\bigcap_n J_n=\varnothing$. For each $i\in I$, set $n(i)=\max\{n\in\mathbb N: i\in J_n\cap I^n_{1/n}\}$ and observe that $n(i)\to\infty$ along $\mathscr U$.

Let us form the ultraproducts $[X^{n(i)}]_\mathscr U, [X^{n(i)}_i]_\mathscr U$ and $[Y^{n(i)}_i]_\mathscr U$.
It is  obvious that $[X^{n(i)}]_\mathscr U$ and $[X^{n(i)}_i]_\mathscr U$ are isometric through the ultraproduct operator $[(\jmath_{n(i),i})]$.
Moreover, there is a linear isometry $\kappa: X\to [X^{n(i)}]_\mathscr U$ that we may define taking $\kappa(x)=[(x_i)]$, where $x_i\in X^{n(i)}$ is any point minimizing the ``distance'' from $x$ to $X^{n(i)}$ and the same applies to $Y$ and $[Y^{n(i)}_i]_\mathscr U$.

For each $i\in I$ consider the composition $g\circ\jmath_{n(i),i}^{-1}$ which is a strict $(1/n(i))$-isometry from $X^{n(i)}_i$ into $Y^{n(i)}$. On account of Lemma~\ref{helpful} we may find an $(1/n(i))$-isometry $f_i: Y^{n(i)}\to \mathbb G_p$ such that $\|f_i(g(\jmath_{n(i),i}^{-1}(x)))-x\|\leq \|x\|/n(i)$ for every $x\in X^{n(i)}_i$. It is now obvious that if $f:Y\to [\mathbb G_p]_\mathscr U$ denotes the composition  of the embedding $Y\to [Y^{n(i)}_i]_\mathscr U$ with the ultraproduct operator $[(f_i)]$ one obtains an isometry with $f(g(x))=x$ for every $x\in X$.
\end{proof}

Notice that, while it is unclear whether the spaces arising in the proof of Theorem~\ref{w1} are isotropic or not, it follows from Corollary~\ref{almost isotropic} and rather standard ultraproduct techniques that every ultrapower of $\mathbb G_p$ built over a countably incomplete ultrafilter is isotropic.

\subsection{Universal spaces}\label{universal}
As we have already mentioned, under the continuum hypothesis, all the spaces having the properties appearing in Theorem~\ref{w1} are isometric. It was observed in \cite[Proposition 4.7]{accgm} that, in the Banach space setting, the uniqueness cannot be proved in ${\sf ZFC}$, the usual setting of set theory, with the axiom of choice. This depends on the fact that it is consistent with ${\sf ZFC}$ that there is no Banach space of density $2^{\aleph_0}$ containing an isometric copy of all Banach spaces of density $2^{\aleph_0}$, a recent result by Brech and Koszmider \cite{b-k}. Whether or not  the same happens to $p$-Banach spaces is left open to reflection.

\subsection{Vector-valued Sobczyk's theorem without local convexity}
Sobczyk's theorems states that $c_0$, the Banach space of all sequences converging to zero with the sup norm is separably injective -- amongst Banach spaces, of course. More interesting for us is that if $E$ is a separably injective Banach space, then so is $c_0(E)$ -- the space of sequences converging to $0$ in $E$. Several proofs of this fact are available. Some of them made strong use of local convexity. For instance, Johnson-Oikhberg's argument in \cite{j-o} is based on $M$-ideal theory, while Castillo-Moreno proof in \cite{c-m} uses the bounded approximation property, a very rare property outside the Banach space setting. We don't know if Rosenthal's proof
in \cite{ros} would survive without local convexity or not, but in any case the proof in \cite{c} applies verbatim to $p$-Banach spaces. So we have the following.

\begin{proposition}
If $E$ is separably injective amongst $p$-Banach spaces, then so is $c_0(E)$.
\end{proposition}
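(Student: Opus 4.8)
The plan is to reduce the statement to the separable injectivity of $E$ applied coordinatewise, and then to correct the resulting extension so that it takes values in $c_0(E)$ rather than merely in $\ell_\infty(E)$. First I would set up the data: let $X$ be a separable $p$-Banach space, $Y\subseteq X$ a closed subspace, and $\map t Y {c_0(E)}$ a nonzero operator, which we normalise to $\|t\|=1$. Writing $\pi_n$ for the $n$-th coordinate projection $c_0(E)\to E$ and putting $t_n=\pi_n\cmp t$, each $\map {t_n}Y E$ is nonexpansive, $\|t\|=\sup_n\|t_n\|$, and the very fact that $t$ lands in $c_0(E)$ says that $\|t_n(y)\|\to 0$ for every $y\in Y$. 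Fixing $\lambda$ so that $E$ is separably $\lambda$-injective, we extend each $t_n$ to $\map {T_n}X E$ with $\|T_n\|\le\lambda$. These assemble into a single bounded operator $\map\Theta X {\ell_\infty(E)}$, $\Theta(x)=(T_n(x))_n$, with $\|\Theta\|\le\lambda$ and $\Theta|_Y=t$; in particular $\Theta|_Y$ already takes values in $c_0(E)$.

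The theorem then amounts to correcting $\Theta$ so that it sends all of $X$ into $c_0(E)$ while still extending $t$. Concretely I would look for operators $\map {R_n}X E$ with $R_n|_Y=0$, with $\sup_n\|R_n\|<\infty$, and with $\|(T_n-R_n)(x)\|\to 0$ for every $x\in X$; then $S=(T_n-R_n)_n\colon X\to c_0(E)$ is a bounded operator extending $t$, as required. Since the $R_n$ vanish on $Y$ they factor through the separable quotient $X/Y$, and the demand that $T_n-R_n\to 0$ pointwise is made consistent precisely by the input $\|T_n(y)\|\to 0$ on $Y$. One produces the $R_n$ by a gliding-hump construction over a dense sequence of $X$, organised in consecutive blocks of coordinates, using the separable injectivity of $E$ once more to extend the finite-dimensional pieces and the $p$-subadditivity $\|x+y\|^p\le\|x\|^p+\|y\|^p$ to bookkeep all quasinorm estimates.

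The hard part will be keeping the corrections uniformly bounded, $\sup_n\|R_n\|<\infty$, for this is exactly what guarantees that the single operator $S$ is bounded. In the classical Banach case this is where Sobczyk's argument invokes weak-$*$ compactness of the unit ball of the annihilator $Y^\perp$ to manufacture norm-one corrections inside $Y^\perp$; for $p<1$ that device is simply unavailable, since the dual of $E$ may be trivial and, by Proposition~\ref{inj}(a), there is not even a nonzero injective $p$-Banach space into which one could embed. The bounded corrections must therefore be built by hand, and this is precisely the local-convexity-free content of the argument in \cite{c}: it uses only the separable injectivity of $E$ together with the $p$-subadditivity of the quasinorm, and never appeals to duality, smoothness, $M$-ideals, or the bounded approximation property. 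Because every ingredient of that argument, namely the coordinatewise extension, the finite-dimensional amalgamations, and the elementary quasinorm inequalities, makes sense and remains valid for every $p\in(0,1]$, the proof transcribes verbatim to the $p$-Banach setting, which is exactly the assertion to be proved.
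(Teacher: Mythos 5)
Your proposal matches the paper exactly: the paper offers no argument beyond the observation that the local-convexity-free proof of Sobczyk's theorem in \cite{c} applies verbatim to $p$-Banach spaces, which is precisely the conclusion you reach after sketching the structure of that argument (coordinatewise extension via separable $\lambda$-injectivity of $E$, followed by a uniformly bounded correction vanishing on $Y$ that forces the values into $c_0(E)$). Your diagnosis of why the classical weak-$*$ compactness device is unavailable for $p<1$ likewise coincides with the paper's surrounding discussion, so this is the same approach, correctly outlined.
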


We do not know whether there is a nontrivial separable space, separably injective amongst $p$-Banach spaces when $p<1$, but our guess is no.
In any case, such a space would necessarily be a complemented subspace of $\mathbb G_p$.

\subsection{Operators on $\mathbb G_p$ when $p<1$} It is a classical result in quasi-Banach space theory that every operator from $L_p$ to a $q$-Banach space for $p<q\leq 1$ is zero. It follows easily that the same is true replacing $L_p$ by $\Gp$. In particular, the dual of $\Gp$ is trivial.
In a similar vein, there is no nonzero operator from $\mathbb G_p$ into any $L_q$ (here $q$ can be 0) and there is no compact operator on $\mathbb G_p$; the first statement follows from the fact that there is no nonzero operator from $L_p/H_p$ to $L_0$, see \cite{alek} and the second one from the fact that every operator defined on $L_p$ is either zero or an isomorphism on a copy of $\ell_2$, see \cite[Theorem 7.20]{kpr} for which is perhaps the simplest proof.

We do not know whether $\Gp$ is isomorphic to all its quotients or complemented subspaces.
In particular we don't know whether $\Gp$ is isomorphic to its quotient by a line.

This is clearly connected to the notion of a $K$-space. Recall that a quasi-Banach space $X$ is said to be a $K$-space if whenever $Z$ is a quasi-Banach space with a subspace $L$ of dimension one such that $Z/L$ is isomorphic to $X$, then $L$ is complemented in $Z$ and so $Z$ is isomorphic to $\mathbb K\oplus X$.

It would be interesting to know whether the spaces $\Gp$ are $K$-spaces or not. The case $p=1$ is solved in the affirmative by a deep result of Kalton and Roberts \cite[Theorem 6.3]{kr}, who proved that every $\mathscr L_\infty$-space, and in particular the Gurari\u\i\ space, is a $K$-space.

\subsection*{Acknowledgments}
The proof of Proposition~\ref{PBenYakHay} is due to Haydon who presented it to the third named author during the {\it Workshop on Forcing Axioms and their Applications} held at the Fields Institute, Toronto, 22 -- 26 October 2012.
We thank Richard Haydon for this elegant argument.

\end{document}